\documentclass[11pt,twoside]{article}
\usepackage{acta-m,euler}
\newtheorem{theo}{Theorem} 
\newtheorem{rem}{Remark} 
\newtheorem{lem}{Lemma} 
\newtheorem{cor}{Corollary} 

\newcommand{\R}{\mathbb{R}}

\renewcommand{\d}{{\ \rm d}}
\newcommand{\ep}{{\epsilon}}

\title{Asymptotic behavior of the solution of quasilinear parametric variational inequalities in a beam with a thin neck
}

\setcounter{page}{5}  


\begin{document}
\maketitle

\oneauthor{Iuliana Marchis}
{Babe\c{s}-Bolyai University\\ Faculty of Psychology and Educational Sciences\\ Str. Kogalniceanu, Nr. 4\\ 400084 Cluj-Napoca, Romania
}
{krokusz@gmail.com}

\short{I. Marchis}{Asymptotic behavior of the solution of
parametric variational inequalities }

\begin{abstract}
In this paper we study the asymptotic behavior of the solution of quasilinear parametric variational inequalities posed in a cylinder with a thin neck, and we obtain the limit problem.
\end{abstract}

\section{Introduction}

The aim of the paper is to study the asymptotic behavior of the solution of quasilinear variational inequalities in a beam with a thin neck. Mathematically, this  notched beam is given by
\begin{equation}\label{hengerek}\nonumber
\Omega_\ep =\{(x_1, x')\in \R^3: -1 < x_1 < 1, |x'|<\ep \ \mbox{ if } |x_1|>t_\ep, |x'|<\ep r_\ep  \ \mbox{ if } |x_1|\leq t_\ep\},
\end{equation}
where $\ep$, $r_\ep$, and $t_\ep$ are positive parameters such that $\frac{\ep r_\ep}{t_\ep}\to 0$.

Previous work on domains of this type was done by Hale \& Vegas \cite{hale}, Jimbo \cite{jimbo1,jimbo2}, Cabib, Freddi, Morassi, \& Percivale \cite{cabib}, Rubinstein, Schatzman \& Sternberg \cite{rubi}, Casado-D\'iaz, Luna-Laynez \& Murat \cite{casado1, casado2} and Kohn \& Slastikov \cite{kohn}.

The most recent results are of Casado-D\'iaz, Luna-Laynez \& Murat \cite{casado2}. They studied the asymptotic behavior of the solution of a diffusion equation in the notched beam $\Omega_\ep$ and obtained at the limit a one-dimensional model.

In the present article the geometrical setting is the same as in \cite{casado2}, but we consider quasilinear variational inequalities instead of linear variational equalities.

The paper is organized as follows. In Section 2 the geometrical setting is described, the studied problem is given, and the assumptions for our results are formulated. In Section 3 the asymptotic behavior of the solution is studied. Some results from \cite{marchis} are recalled which, unfortunately, don't provide information about what happening near to the notch. Thus we need to prove some auxiliary results.   In Section 4 the limit problem is obtained. To prove the results in this section, we combine the ideas from \cite{mossino} with the adaptation to variational inequalities of the method used in \cite{casado2}.

\section{Setting the problem}

Let $\ep>0$ be a parameter, $r_\ep$ ($r_\ep>0$) and $t_\ep$ ($t_\ep>0$) be two sequences of real numbers, with
$$r_\ep \to 0, \ \ \ \ \ t_\ep \to 0, \ \ \mbox{ when } \ep\to 0.$$
We assume that
$$\frac{t_\ep}{r^2_\ep}\to \mu, \ \ \frac{\ep}{r_\ep}\to \nu, \ \ \ \mbox{ with } 0\leq \mu < +\infty, \ 0\leq \nu < +\infty,  \ \ \mbox{ when } \ep\to 0.$$
Let $S\subset \R^2$ be a bounded domain such that $0\in S$, which is sufficiently smooth to apply the Poincar\'e-Wirtinger inequality.

Define the following subsets of $\R^3$:
$$\Omega_\ep^-=(-1, -t_\ep)\times (\ep S), \ \ \Omega_\ep^0=[-t_\ep, t_\ep]\times (\ep r_\ep S),\ \ \Omega_\ep^+=(t_\ep,1)\times (\ep S),$$
$$\Omega_\ep = \Omega_\ep^- \cup \Omega_\ep^0 \cup \Omega_\ep^+, \ \mbox{ and } \  \Omega_\ep = \Omega_\ep^-  \cup \Omega_\ep^+.$$

\noindent $\Omega_\ep$ is a notched beam, the main part of the beam is $\Omega_\ep^1$ and the notched part $\Omega_\ep^0$.
A point of $\Omega^\ep$ is denoted by $x=(x_1, x')=(x_1,x_2,x_3)$.



Denote by
$$\Gamma_\ep^-=\{-1\}\times (\ep S) \ \mbox{ and }  \ \Gamma_\ep^+=\{1\}\times (\ep S)$$
the two bases of the beam, and let
$$\Gamma_\ep = \Gamma_\ep^- \cup \Gamma_\ep^+$$
be  the union of the two bases.


Denote
$$\mathcal{V}_\ep=\{V\in H^{1}(\Omega_\ep),\ \ V=0 \mbox{ on } \Gamma_\ep \}.$$



We consider the following problem:\\
Find $U_\epsilon \in M_\ep$ such that, for all $V_\epsilon\in M_\ep$,
\begin{align}\label{nagyegyen}
&\int_{\Omega_\epsilon}\left[A_\epsilon \Phi_\epsilon(x,U_\epsilon,B_\epsilon)\nabla U_\epsilon,\nabla(V_\epsilon-U_\epsilon) \right] \d x \geq 0
\end{align}
with $A_\epsilon$, $B_\epsilon$, and $\Phi_\epsilon$,  given functions,  $M_\ep$ a closed, convex, nonempty cone in $\mathcal{V}_\ep$.


This problem has applications in Physics. Bruno \cite{bruno} observed that when a ferromagnet has a thin neck, this will be preferred location for the domain wall. He also noticed that if the geometry of the neck varies rapidly enough, it can influence and even  dominate the structure of the wall.

Consider problem (\ref{nagyegyen}). We impose the following assumptions:

{\vskip 0.3cm}

{\bf (A1)} The matrix $A_\ep$ has the following form
\begin{equation}\nonumber
A_\ep(x) = \chi_{\Omega_\ep^1}(x) A^1 \left(x_1, \frac{x'
}{\ep} \right)+\chi_{\Omega_\ep^0}(x) A^0\left(\frac{x_1}{t_\ep},\frac{x'}{\ep r_\ep}\right),
\end{equation}
where  $A^1, A^0 \in L^\infty((-1,1)\times S)^{3\times 3}$.

{\vskip 0.3cm}

{\bf (A2)} The matrix $B_\ep$ has the following form
\begin{equation}\nonumber
B_\ep(x) = \chi_{\Omega_\ep^1}(x) B^1 \left(x_1, \frac{x'
}{\ep} \right)+\chi_{\Omega_\ep^0}(x) B^0\left(\frac{x_1}{t_\ep},\frac{x'}{\ep r_\ep}\right),
\end{equation}
where $B^1, B^0\in L^\infty((-1,1)\times S)^{3\times 3}$.

{\vskip 0.3cm}

{\bf (A3)} The functions $\Phi_\epsilon:\Omega_\ep\times\R \to \R^{3\times 3}$ and $\Psi_\epsilon:\Omega_\ep \times \R \to \R^3$ are Carath\'eodory mappings having the following form:
\begin{align*}
\Phi_\ep(x, \eta) &= \chi_{\Omega_\ep^1}(x) \Phi^1_\ep \left(x_1, \frac{x'
}{\ep}, \eta \right)+\chi_{\Omega_\ep^0}(x) \Phi^0_\ep \left(\frac{x_1}{t_\ep},\frac{x'}{\ep r_\ep}, \eta\right);
\end{align*}
for a.e. $x \in \Omega_\ep$, for all $\eta\in \R$; \\
for all $U_\ep \in L^2(\Omega_\ep)$, $W_\ep \in L^2(\Omega_\ep)^3$, $\Phi_\ep^1(\cdot, U_\ep(\cdot))W_\ep(\cdot), \Phi_\ep^0(\cdot, U_\ep(\cdot))W_\ep(\cdot) \in L^{2}((-1,1)\times S)^{3}$.
{\vskip 0.3cm}

\newpage

{\bf (A4)}
 {\it Coercivity condition}

\noindent There exist $C_1, C_2 > 0$ and $k_1\in L^\infty (\Omega_\ep)$ such that for all $\xi \in \R^3$, $\eta \in \R$
\begin{equation}\label{coer1}
[A_\ep(x) \Phi_\ep(x,\eta) B_\ep(x)\xi,\xi] \geq C_1 \|\xi\|^2+C_2 |\eta|^{q_1}-k_1(x) \ \ \mbox{a.e. } x\in \Omega_\ep
\end{equation}
for some $1<q_1<2$, for each $\ep>0$.
{\vskip 0.3cm}

{\bf (A5)} {\it Growth condition}

\noindent There exist $C>0$ and $\alpha \in L^{\infty}(\Omega_\ep)$ such that for all $\xi \in \R^3$, $\eta\in\R$
\begin{equation}\label{grow1}
\|A_\ep(x) \Phi_\ep(x, \eta)\xi \|\leq C\|\xi\|+C|\eta|+\alpha(x) \ \ \mbox{ a.e. } x\in \Omega_\ep,
\end{equation}
for each $\ep>0$.


{\vskip 0.3cm}

{\bf (A6)} {\it Monotonicity condition}

\noindent
For all $\xi, \tau \in \R^n$, $\eta \in \R$,
\begin{equation}\nonumber
\left[A_\epsilon(x)\Phi_\epsilon(x, \eta) B_\epsilon(x)\xi - A_\epsilon(x)\Phi_\epsilon(x, \eta) B_\epsilon(x)\tau, \xi-\tau \right]\geq 0, \ \mbox{ a. e. } x\in \Omega_\ep,
\end{equation}
for each $\ep>0$.

{\vskip 0.3cm}

{\bf (A7)} If $u_\ep \to u$ and $w_\ep \rightharpoonup w$ in $L^2(Y^1)$, then
$$\Phi^1_\ep(\cdot, u_\ep(\cdot))w(\cdot) \to \Phi^1(\cdot, u(\cdot))w(\cdot) \ \mbox{ strongly in } L^2(Y^1).$$

If $u_\ep \to u$ and $w_\ep \rightharpoonup w$ in $L^2(Z)$, then
$$\Phi^0_\ep(\cdot, u_\ep(\cdot))w(\cdot) \to \Phi^0(\cdot, u(\cdot))w(\cdot) \ \mbox{ strongly in } L^2(Z).$$









\section{Asymptotic behavior of the solution}

To study the asymptotic behavior we use the change of variables $y=y_\ep(x)$ given by
\begin{equation}\label{ch1}
y_1 = x_1 \ \ \ y'=\frac{x'}{\ep}
\end{equation}
 which transforms the beam (except the notch) in a cylinder of fixed diameter. This change of variable is classical in the study  of asymptotic behavior of variational equalities in thin cylinders or beams (see \cite{gustafsson}, \cite{muratsili}, \cite{sili}). We denote by $Y_\ep^-$, $Y_\ep^0$, $Y_\ep^+$, $Y_\ep$, and $Y_\ep^S$ the images of $\Omega_\ep^-$, $\Omega_\ep^0$, $\Omega_\ep^+$, $\Omega_\ep$, and $\Omega_\ep^S$ by the change of variables $y=y_\ep(x)$, i.e.
 $$Y_\ep^-=(-1, -t_\ep)\times S, \ \ Y_\ep^0=[-t_\ep, t_\ep]\times (r_\ep S), \ \ Y_\ep^+=(t_\ep,1)\times S,$$
 $$Y_\ep=Y_\ep^- \cup Y_\ep^0 \cup Y_\ep^+, \ \   Y^1_\ep = Y^-_\ep \cup Y_\ep^+.$$
 Denote by $Y^-$, $Y^+$, and $Y^1$ the ''limits''of $Y_\ep^-$, $Y_\ep^+$, and $Y_\ep^1$, i.e.
 $$Y^-=(-1,0)\times S, \ \ Y^+=(0,1)\times S, \ \  Y^1=Y^-\cup Y^+.$$
 Note that $Y^1_\ep$ is contained in its limit $Y^1$.

 The two bases of the beam $\Gamma_\ep^-$ and $\Gamma_\ep^+$ are transformed to $\Lambda^-$ and $\Lambda^+$, respectively, where
 $$\Lambda^-=\{-1\}\times S \ \mbox{ and } \ \Lambda^+=\{1\}\times S.$$
 $\Gamma_\ep$ transforms to $\Lambda=\Lambda^- \cup \Lambda^+$.

 Let $U_\epsilon \in M_\ep$ be the solution of the variational inequality (\ref{nagyegyen}). Define $u_\epsilon \in K_\ep$ by
 \begin{equation}\label{kisu}
  u_\ep(y) = U_\ep (y^{-1}_\ep(y)) \ \ \mbox{ a.e. } y\in Y_\ep.
  \end{equation}
$K_\ep$ being the image of $M_\ep$. $K_\ep$  is a closed, convex, nonempty cone in $\mathcal{D}_\ep$, with $\mathcal{D}_\ep = \{v\in H^{1}(Y_\ep)\left|\right. v=0 \mbox{ on }  \Lambda\}$. We need the following two assumptions:

{\vskip 0.3cm}

{\bf (A8)} There exists a nonempty, convex cone  $K$ in $H^1(Y^1)$ such that

{\hskip 1.3cm}(i) $K\cap H^1((-1,0)\cup (0,1)) \neq \emptyset$;

{\hskip 1.2cm}(ii) $\ep_i\to 0$, $u_{\ep_i}\in K_{\ep_i}$, $u\in H^1((-1,0)\cup (0,1))$, $u_{\ep_i}\rightharpoonup u$ (weakly) in

{\hskip 1.7cm} $H^1(Y^1)$ imply $u\in K$.


{\vskip 0.3cm}

{\bf (A9)} There exists a nonempty, convex cone $L$ in $L^2((-1,1);H^1(S))$ such

{\hskip 1cm} that

{\hskip 1.2cm} $\ep_i\to 0$, $w_{\ep_i}\in K_{\ep_i}$, $w\in L^2((-1,1);H^1(S))$, $w_{\ep_i}\rightharpoonup w$ (weakly) in

{\hskip 1.2cm}  $L^2((-1,1);H^1(S))$ imply $w\in L$.

 {\vskip 0.3cm}

By change of variables $y=y_\ep(x)$ the operator $\nabla$ transforms to
\begin{equation}\label{nablaep}\nonumber
\nabla^\ep \cdot = \left(\frac{\partial \cdot}{\partial y_1}, \frac{1}{\ep}\frac{\partial \cdot}{\partial y_2}, \frac{1}{\ep}\frac{\partial \cdot}{\partial y_3} \right).
\end{equation}






In the following we recall some results from \cite{marchis,casado2}.

\begin{lem}[\cite{marchis}]\label{corfelt3.1}
 Let $U_\ep\in M_\ep$ be the solution of the inequality (\ref{nagyegyen}) and $u_\ep \in K_\ep$ given by (\ref{kisu}). If assumptions (A1) - (A6) are verified then the sequence $U_\ep$ satisfies
 \begin{equation}\label{felt3.1}
 U_\ep \in M_\ep, \ \ \frac{1}{|\Omega_\ep|}\int_{\Omega_\ep}|\nabla U_\ep|^2 dx \leq C.
 \end{equation}
 \end{lem}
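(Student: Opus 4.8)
The plan is to exploit the conical structure of $M_\ep$ together with the coercivity assumption (A4); the growth and monotonicity conditions (A5)--(A6), like (A1)--(A3), are needed only to guarantee that the solution $U_\ep$ exists (which is part of the hypothesis here), so the estimate itself rests almost entirely on (A4). Since $M_\ep$ is a nonempty closed convex cone with vertex at the origin, we have $0\in M_\ep$, so $V_\ep=0$ is an admissible test function in (\ref{nagyegyen}). Because $\nabla(V_\ep-U_\ep)=-\nabla U_\ep$ for this choice, substituting it yields
$$-\int_{\Omega_\ep}\left[A_\ep\Phi_\ep(x,U_\ep)B_\ep\nabla U_\ep,\nabla U_\ep\right]\d x\geq 0,$$
that is, the energy integral is nonpositive:
$$\int_{\Omega_\ep}\left[A_\ep\Phi_\ep(x,U_\ep)B_\ep\nabla U_\ep,\nabla U_\ep\right]\d x\leq 0.$$

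Next I would bound the same integral from below pointwise, applying the coercivity condition (\ref{coer1}) with $\xi=\nabla U_\ep(x)$ and $\eta=U_\ep(x)$, which gives
$$\int_{\Omega_\ep}\left[A_\ep\Phi_\ep(x,U_\ep)B_\ep\nabla U_\ep,\nabla U_\ep\right]\d x\geq C_1\int_{\Omega_\ep}|\nabla U_\ep|^2\,\d x+C_2\int_{\Omega_\ep}|U_\ep|^{q_1}\,\d x-\int_{\Omega_\ep}k_1(x)\,\d x.$$
Combining the two displays and discarding the nonnegative term $C_2\int_{\Omega_\ep}|U_\ep|^{q_1}\d x$ leaves
$$C_1\int_{\Omega_\ep}|\nabla U_\ep|^2\,\d x\leq\int_{\Omega_\ep}k_1(x)\,\d x\leq\|k_1\|_{L^\infty(\Omega_\ep)}\,|\Omega_\ep|.$$
Dividing through by $|\Omega_\ep|$ then produces the claimed estimate (\ref{felt3.1}) with $C=\|k_1\|_{L^\infty(\Omega_\ep)}/C_1$, while the membership $U_\ep\in M_\ep$ is nothing but the definition of the solution.

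The computation is routine; the point that genuinely requires care is the uniformity of $C$ in $\ep$. Since the domain $\Omega_\ep$ degenerates as $\ep\to0$, the normalization by $|\Omega_\ep|$ is essential: it is exactly what cancels the measure factor arising from $\int_{\Omega_\ep}k_1$, so that the bound collapses to the $\ep$-free quotient $\|k_1\|_{L^\infty}/C_1$. I would therefore check that $\|k_1\|_{L^\infty(\Omega_\ep)}$ and $C_1$ can be taken independent of $\ep$, which is implicit in the way (A4) posits the \emph{same} constants for each $\ep>0$; this is what prevents $C$ from blowing up along the sequence. Notice that no compactness argument and no fine information about the neck $\Omega_\ep^0$ enter here, which is precisely why this a priori bound, though it controls the global normalized energy, gives no control over the behavior near the notch, as remarked in the introduction.
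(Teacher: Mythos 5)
Your argument is correct: testing (\ref{nagyegyen}) with $V_\ep=0$ (admissible since $M_\ep$ is a nonempty closed cone) makes the energy integral nonpositive, and the coercivity condition (A4) with $\xi=\nabla U_\ep(x)$, $\eta=U_\ep(x)$ then yields $C_1\int_{\Omega_\ep}|\nabla U_\ep|^2\,\d x\le \int_{\Omega_\ep}k_1\,\d x\le \|k_1\|_{L^\infty}|\Omega_\ep|$, which is exactly (\ref{felt3.1}) after dividing by $|\Omega_\ep|$; your remark that the uniformity of $C$ hinges on $C_1$ and $\|k_1\|_{L^\infty(\Omega_\ep)}$ being $\ep$-independent is the right point to flag. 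The paper itself states this lemma as a recalled result from \cite{marchis} and gives no proof, so there is nothing to compare against here, but your derivation is the standard one and is what the quantifier structure of (A4) is designed to support.
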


\begin{theo}[\cite{marchis}]\label{yhatarertek}
Let $U_\epsilon$ be the solution of the variational inequality (\ref{nagyegyen}) and  $u_\epsilon \in K_\ep$ defined  by $$u_\ep(y) = U_\ep (y^{-1}_\ep(y)) \ \ \mbox{ a.e. } y\in Y_\ep.$$ If assumptions (A1)-(A6) and (A8)-(A9) are verified, then
there exist three functions $u$, $w$, and $\sigma^1$ with
$$ u\in H^{1}((-1,0)\cup (0,1))\cap K, \ \ u(-1)=u(1)=0,$$
$$w\in L, \ \ \ \sigma^1 \in L^2(Y^1)^3,$$
such that up to extraction of a subsequence
 \begin{equation}\label{uhatary}\nonumber
 \chi_{Y_\ep^1}u_\ep \to u \ \ \mbox{\rm in } \ \ L^2(Y^1);
 \end{equation}
 $$\chi_{Y_\ep^-}\frac{\partial u_\ep}{\partial y_1} \rightharpoonup \frac{\partial u}{\partial y_1} \ \ \mbox{\rm in } \ \ L^2(Y^-);$$
 $$\chi_{Y_\ep^+}\frac{\partial u_\ep}{\partial y_1} \rightharpoonup \frac{\partial u}{\partial y_1} \ \ \mbox{\rm in } \ \ L^2(Y^+);$$
 $$\chi_{Y_\ep^1}\frac{1}{\ep}\nabla_{y'}u_\ep \rightharpoonup \nabla_{y'}w \ \ \mbox{\rm in } \ L^2(Y^1)^2;$$
 and
 $$\chi_{Y_\ep^1}\sigma_\ep \rightharpoonup \sigma^1 \ \ \mbox{ in } \ L^2(Y^1)^3.$$
\end{theo}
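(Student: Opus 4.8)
The plan is to begin from the uniform energy estimate of Lemma~\ref{corfelt3.1} and transport it through the change of variables \eqref{ch1} onto the fixed cylinder $Y^1$. Since $y=y_\ep(x)$ turns $\nabla$ into $\nabla^\ep$, the bound $\frac{1}{|\Omega_\ep|}\int_{\Omega_\ep}|\nabla U_\ep|^2\,dx\le C$ becomes the uniform estimates
$$\int_{Y_\ep^1}|u_\ep|^2\,dy\le C,\qquad \int_{Y_\ep^1}\left|\frac{\partial u_\ep}{\partial y_1}\right|^2\,dy\le C,\qquad \int_{Y_\ep^1}\frac{1}{\ep^2}|\nabla_{y'}u_\ep|^2\,dy\le C.$$
The first two say that $\chi_{Y_\ep^1}u_\ep$ is bounded in $H^1(Y^1)$ (after extension by the characteristic function), while the third is the decisive scaled estimate controlling the transverse oscillation.

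First I would invoke weak compactness in the Hilbert space $H^1(Y^1)$ to extract a subsequence with $\chi_{Y_\ep^1}u_\ep\rightharpoonup u$ weakly in $H^1(Y^1)$; the Rellich--Kondrachov theorem then upgrades this to the strong convergence $\chi_{Y_\ep^1}u_\ep\to u$ in $L^2(Y^1)$, the first claim. Weak $H^1$-convergence passes to derivatives, giving $\chi_{Y_\ep^\pm}\partial_{y_1}u_\ep\rightharpoonup \partial_{y_1}u$ on $Y^\pm$. The key structural observation is that $\nabla_{y'}u_\ep=\ep\cdot\bigl(\tfrac{1}{\ep}\nabla_{y'}u_\ep\bigr)$ has $L^2(Y^1)$-norm $O(\ep)$, hence $\nabla_{y'}u_\ep\to 0$ strongly; therefore the transverse gradient of the limit vanishes and $u$ is independent of $y'$, i.e.\ $u\in H^1((-1,0)\cup(0,1))$, with the two halves decoupled because the neck degenerates at the origin. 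The homogeneous end conditions $u(-1)=u(1)=0$ follow from $u_\ep=0$ on $\Lambda$ together with the weak continuity of the trace on $\{y_1=\pm1\}$, and $u\in K$ is immediate from the closure property (A8)(ii).

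Next I would identify the transverse corrector. Since $\tfrac{1}{\ep}\nabla_{y'}u_\ep$ is bounded in $L^2(Y^1)^2$ it converges weakly along a further subsequence; to realize this limit as a genuine gradient I would introduce the centered rescaling $w_\ep:=\tfrac{1}{\ep}\bigl(u_\ep-\langle u_\ep\rangle_{y'}\bigr)$, where $\langle\cdot\rangle_{y'}$ is the average over the section $S$. By construction $w_\ep$ has zero transverse mean and $\nabla_{y'}w_\ep=\tfrac{1}{\ep}\nabla_{y'}u_\ep$, so the Poincar\'e--Wirtinger inequality on $S$ (valid by the smoothness hypothesis on $S$) bounds $w_\ep$ uniformly in $L^2((-1,1);H^1(S))$. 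Extracting a weak limit $w_\ep\rightharpoonup w$ yields $\tfrac{1}{\ep}\nabla_{y'}u_\ep\rightharpoonup\nabla_{y'}w$ in $L^2(Y^1)^2$, the fourth claim, while $w\in L$ comes from (A9). Finally, the flux $\sigma_\ep$ is bounded in $L^2(Y^1)^3$ by the growth condition (A5) applied to $\nabla^\ep u_\ep$ together with the $L^2$-bound on $u_\ep$, so one last extraction gives $\chi_{Y_\ep^1}\sigma_\ep\rightharpoonup\sigma^1$.

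The main obstacle will be the rigorous treatment of the characteristic functions $\chi_{Y_\ep^1}$ and the realization of the weak transverse limit as $\nabla_{y'}w$. Because $Y_\ep^1$ is strictly contained in its limit $Y^1$ --- the two differing by a slab of width $2t_\ep$ around the origin --- one must extend $u_\ep$ and $w_\ep$ to all of $Y^1$ and verify that these extensions neither spoil the uniform bounds nor alter the weak limits; the behavior near the junctions $\{y_1=\pm t_\ep\}$, where $Y_\ep^1$ meets the vanishing slab, is the delicate point. The Poincar\'e--Wirtinger step is the analytical heart of the argument, since it converts the mere $L^2$-boundedness of the scaled transverse gradient into compactness for the corrector $w_\ep$, without which the weak limit could not be recognized as the gradient of a single function $w$.
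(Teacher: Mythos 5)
The paper does not prove Theorem~\ref{yhatarertek}: it is recalled from \cite{marchis} without proof, so there is no internal argument to compare against line by line. Your outline is the standard dimension-reduction route (as in \cite{muratsili}, \cite{sili}, \cite{casado2}), and it is almost certainly the intended one: rescale the energy estimate of Lemma~\ref{corfelt3.1} into $\int_{Y_\ep^1}|\nabla^\ep u_\ep|^2\le C$, kill the transverse gradient of the limit, and recover $w$ through the Poincar\'e--Wirtinger corrector. Two points deserve more care than you give them. First, the bound $\int_{Y_\ep^1}|u_\ep|^2\le C$ is not a direct consequence of the gradient estimate; it requires a longitudinal Poincar\'e inequality using $u_\ep=0$ on $\Lambda$ and the $L^2$ bound on $\partial_{y_1}u_\ep$ (integrate along lines from the bases). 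You assert the $L^2$ bound without saying where it comes from.

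Second, and more seriously, the sentence ``$\chi_{Y_\ep^1}u_\ep$ is bounded in $H^1(Y^1)$ (after extension by the characteristic function)'' is false as written: multiplying by $\chi_{Y_\ep^1}$ creates jumps across the interfaces $\{y_1=\pm t_\ep\}$, so the extension is not in $H^1(Y^1)$ and Rellich--Kondrachov cannot be applied on $Y^1$ directly. You flag this at the end as ``the delicate point'' but do not resolve it. The standard resolution is to fix $\delta>0$, note that $Y_\ep^1\supset\bigl((-1,-\delta)\cup(\delta,1)\bigr)\times S$ for $\ep$ small, extract limits on these fixed subdomains where the genuine $H^1$ bound and compactness hold, and then pass to $Y^1$ by a diagonal argument combined with the fact that $|Y^1\setminus Y_\ep^1|=2t_\ep|S|\to 0$ and the uniform $L^2$ bound controls the contribution of the vanishing slab. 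Without some such device the extraction of $u$ and the identification of the weak limits of $\chi_{Y_\ep^\pm}\partial_{y_1}u_\ep$ are not justified. The remainder of your argument --- vanishing of $\nabla_{y'}u$, the corrector $w_\ep=\ep^{-1}(u_\ep-\langle u_\ep\rangle_{y'})$ controlled by Poincar\'e--Wirtinger on $S$, membership $u\in K$ and $w\in L$ via (A8)--(A9), and the $L^2$ bound on the flux $\sigma_\ep$ from the growth condition (A5) --- is sound and matches the framework the paper relies on.
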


\begin{theo}[\cite{marchis}]\label{uephatar}
Let $U_\epsilon$ be the solution of the variational inequality (\ref{nagyegyen}) and $ u\in H^{1}((-1,0)\cup (0,1))\cap K$ given in Theorem \ref{yhatarertek}. If assumptions (A1)-(A6) and (A8) are verified, then there exists a subsequence of solutions $U_\ep$, also denoted by $U_\ep$, such that
\begin{equation}\label{foeq}
\lim_{\ep \to 0}\frac{1}{|\Omega_\ep|}\int_{\Omega_\ep}|U_\ep(x)-u(x_1)|^2 \d x =0.
\end{equation}
\end{theo}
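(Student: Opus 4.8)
The plan is to split the domain as $\Omega_\ep=\Omega_\ep^1\cup\Omega_\ep^0$ and to estimate the two contributions to $\frac{1}{|\Omega_\ep|}\int_{\Omega_\ep}|U_\ep-u(x_1)|^2\,\d x$ separately: on the main part $\Omega_\ep^1$ the claim will follow from the strong $L^2$–convergence already recorded in Theorem \ref{yhatarertek}, while on the neck $\Omega_\ep^0$ I will show that the whole contribution is negligible because the neck is short and its volume fraction vanishes.

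For the main part I would push the integral forward through the change of variables (\ref{ch1}). Since $x'=\ep y'$ the Jacobian is $\ep^2$, so that
\begin{equation}\nonumber
\frac{1}{|\Omega_\ep|}\int_{\Omega_\ep^1}|U_\ep(x)-u(x_1)|^2\,\d x=\frac{\ep^2}{|\Omega_\ep|}\int_{Y_\ep^1}|u_\ep(y)-u(y_1)|^2\,\d y .
\end{equation}
A direct computation of $|\Omega_\ep|=2(1-t_\ep)\ep^2|S|+2t_\ep(\ep r_\ep)^2|S|$ shows that $\ep^2/|\Omega_\ep|\to 1/(2|S|)$, so the prefactor stays bounded. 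Because $Y_\ep^1\subset Y^1$ and $u_\ep=\chi_{Y_\ep^1}u_\ep$ there, the remaining integral is dominated by $\|\chi_{Y_\ep^1}u_\ep-u\|_{L^2(Y^1)}^2$, which tends to $0$ by the convergence $\chi_{Y_\ep^1}u_\ep\to u$ in $L^2(Y^1)$ of Theorem \ref{yhatarertek}. Hence the main-part contribution vanishes.

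The heart of the matter is the neck, and the key idea is a Poincar\'e estimate that integrates the gradient all the way from the clamped base, exploiting that the neck cross–section is contained in the main one. For $x'\in\ep r_\ep S$ and $|x_1|\le t_\ep$ the whole segment from $(-1,x')$ to $(x_1,x')$ lies in $\Omega_\ep^-\cup\Omega_\ep^0\subset\Omega_\ep$ because $\ep r_\ep S\subset\ep S$, and $U_\ep(-1,x')=0$ on $\Gamma_\ep^-$; writing $U_\ep(x_1,x')=\int_{-1}^{x_1}\partial_s U_\ep(s,x')\,\d s$, applying Cauchy--Schwarz and integrating first in $x_1$ over the interval of length $2t_\ep$ and then in $x'$ over $\ep r_\ep S$ gives $\int_{\Omega_\ep^0}|U_\ep|^2\,\d x\le 4t_\ep\int_{\Omega_\ep}|\nabla U_\ep|^2\,\d x$. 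Combined with the uniform bound $\int_{\Omega_\ep}|\nabla U_\ep|^2\,\d x\le C|\Omega_\ep|$ of Lemma \ref{corfelt3.1}, this yields $\frac{1}{|\Omega_\ep|}\int_{\Omega_\ep^0}|U_\ep|^2\,\d x\le 4Ct_\ep\to0$. For the target function, $u\in H^1((-1,0)\cup(0,1))$ is bounded near $0$ by the one–dimensional Sobolev embedding, so $\int_{\Omega_\ep^0}|u(x_1)|^2\,\d x\le C(\ep r_\ep)^2 t_\ep|S|$ and, after dividing by $|\Omega_\ep|$, this term is $O(r_\ep^2 t_\ep)\to0$. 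Adding the two estimates shows $\frac{1}{|\Omega_\ep|}\int_{\Omega_\ep^0}|U_\ep-u(x_1)|^2\,\d x\to0$.

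Collecting the two parts along the subsequence furnished by Theorem \ref{yhatarertek} proves (\ref{foeq}). The only genuinely delicate point is the neck: a naive bound based on the cross–sectional average at the junction would require controlling a trace of $\nabla U_\ep$ on a single slice, which the volume estimate of Lemma \ref{corfelt3.1} does not provide. The device that circumvents this is to anchor $U_\ep$ at the clamped base and integrate through the neck, so that the shortness of the neck ($t_\ep\to0$) alone, rather than any fine information about the profile of $U_\ep$ inside it, makes the contribution disappear.
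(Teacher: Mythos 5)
Your argument is correct, and there is nothing in the paper to compare it against: Theorem \ref{uephatar} is only quoted here from \cite{marchis}, with no in-text proof. Your route — splitting off $\Omega_\ep^1$, where the change of variables (\ref{ch1}) together with $\ep^2/|\Omega_\ep|\to 1/(2|S|)$ and the strong convergence $\chi_{Y_\ep^1}u_\ep\to u$ in $L^2(Y^1)$ from Theorem \ref{yhatarertek} does the work, and then killing the neck contribution by anchoring $U_\ep$ at the clamped base $\Gamma_\ep^-$ and integrating along the full-length thin cylinder $(-1,1)\times(\ep r_\ep S)\subset\Omega_\ep$ so that the factor $t_\ep$ combines with the bound of Lemma \ref{corfelt3.1} — is sound and complete. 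Two minor points worth making explicit: the inclusion $\ep r_\ep S\subset\ep S$ needs $r_\ep S\subset S$, which holds for $\ep$ small because $S$ is open and contains $0$; and the identity $U_\ep(x_1,x')=\int_{-1}^{x_1}\partial_s U_\ep(s,x')\d s$ for a.e.\ $x'$ is the standard Fubini/absolute-continuity-on-lines property of $H^1$ functions combined with the vanishing trace on $\Gamma_\ep^-$.
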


 Unfortunately, this change of variables doesn't provide information about what happening near the notch. Thus we use another change of variables, which was given in \cite{casado2}. Consider the case, when
 $$\mu<+\infty \ \ \mbox{ and } \ \ \nu<+\infty.$$ The change of variables $z=z_\ep(x)$ is defined as follows
\begin{equation}\label{ch2}
z_1= \left\{\begin{array}{llll}
\left\{\begin{array}{lll}
\frac{1}{\ep r_\ep}(x_1+t_\ep)- \frac{t_\ep}{r_\ep^2}, & \mbox{if} & -1\leq x_1 \leq -t_\ep,\\
\frac{x_1}{r_\ep^2},                                   & \mbox{if} & -t_\ep\leq x_1 \leq t_\ep,    \ \ \ \ \  \mbox{ if } \mu=0,\\
\frac{1}{\ep r_\ep}(x_1-t_\ep)+ \frac{t_\ep}{r_\ep^2}, & \mbox{if} & t_\ep\leq x_1 \leq 1,
\end{array}\right.\\
\left\{\begin{array}{lll}
\frac{\mu r_\ep}{\ep t_\ep}(x_1+t_\ep)-\mu,             & \ \mbox{if} & -1\leq x_1 \leq -t_\ep,\\
\frac{\mu}{t_\ep}x_1,                                   & \ \mbox{if} & -t_\ep\leq x_1 \leq t_\ep,\    \ \ \ \ \mbox{ if } \mu>0,\\
\frac{\mu r_\ep}{\ep t_\ep}(x_1-t_\ep)+\mu,             & \ \mbox{if} & t_\ep\leq x_1 \leq 1
\end{array}\right.
\end{array}\right.
\  z'=\frac{x'}{\ep r_\ep}.
\end{equation}
This change of variables transforms the notch in a cylinder of fixed diameter and length, but transforms the rest of the beam in a very large domain.  But it allows to describe the behavior of the solution $U_\ep$ of inequality (\ref{nagyegyen}) when $x_1$ is close to zero.

We denote by $Z_\ep^-$, $Z_\ep^0$, $Z_\ep^+$, $Z_\ep$, and $Z_\ep^1$ the images of $\Omega_\ep^-$, $\Omega_\ep^0$, $\Omega_\ep^+$, $\Omega_\ep$, and $\Omega_\ep^1$ by the change of variables $z=z_\ep(x)$, i.e.
 $$Z_\ep^-=\left(-\frac{1-t_\ep}{\ep r_\ep}-\frac{t_\ep}{r_\ep^2},-\frac{t_\ep}{r_\ep^2}\right)\times \left(\frac{1}{r_\ep}S\right), \ \ Z_\ep^0=\left[-\frac{t_\ep}{r_\ep^2}, \frac{t_\ep}{r_\ep^2}\right]\times S, $$
 $$\mbox{and } \ \ Z_\ep^+=\left(\frac{t_\ep}{r_\ep^2},\frac{1-t_\ep}{\ep r_\ep}+\frac{t_\ep}{r_\ep^2}\right)\times \left(\frac{1}{r_\ep}S\right)$$
 if $\mu=0$, and
 $$Z^-_\ep=\left(-\frac{\mu r_\ep (1-t_\ep)}{\ep t_\ep}-\mu, -\mu\right)\times \left(\frac{1}{r_\ep}S \right),\ \  Z_\ep^0=[-\mu, \mu]\times S,$$
 $$\mbox{and} \ Z^+_\ep=\left(\mu, \frac{\mu r_\ep (1-t_\ep)}{\ep t_\ep}+\mu\right)\times \left(\frac{1}{r_\ep}S \right) $$
 if $\mu>0$. We set
 $$Z_\ep=Z_\ep^- \cup Z_\ep^0 \cup Z_\ep^+, \ \   Z^1_\ep = Z^-_\ep \cup Z_\ep^+.$$
We denote by $Z^-$, $Z^+$, and $Z^0$ the "limits" of $Z^-_\ep$, $Z^+_\ep$, and $Z^0_\ep$, i.e.
$$Z^-=(-\infty, -\mu)\times \R^2, \ Z^+=(\mu, +\infty)\times \R^2, \ Z^0 = [-\mu, \mu]\times S,$$
and define
$$Z=Z^-\cup Z^0\cup Z^+, \ Z^1=Z^- \cup Z^+.$$

\begin{rem}[\cite{casado2}]
In (\ref{ch2}) there are two definitions of $z_\ep$ corresponding to the cases $\mu=0$ and $\mu>0$. Actually when $\mu>0$, we could define $z_\ep$ by the definition given for $\mu=0$ because
$$\mu \sim \frac{t_\ep}{r^2_\ep},\ \ \frac{\mu r_\ep}{\ep t_\ep} \sim \frac{1}{\ep r_\ep}, \ \ \mbox{ and } \ \ \frac{\mu}{t_\ep} \sim \frac{1}{r_\ep^2}.$$
The definition (\ref{ch2}) which distinguishes the cases $\mu=0$ and $\mu>0$ has the advantage that the image $Z_\ep$ of $\Omega_\ep$ by the change of variables $z=z_\ep(x)$ is contained in its ''limit'' $Z$ for every $\ep>0$ and $Z^0_\ep$ is fixed for $\mu>0$; then a function defined in $Z$ has a restriction to $Z_\ep$.
\end{rem}


\begin{theo}[\cite{casado2}]\label{zhatarertek}
 Let $(U_\ep)_\ep$ be a sequence which satisfies (\ref{felt3.1}). Define $\hat{u}_\ep \in H^1(Z_\ep)$ by
\begin{equation}\label{kalapuep}
\hat{u}_\ep(z)=U_\ep (z_\ep^{-1}(z)), \ \ \mbox{ a.e. } z\in Z_\ep.
\end{equation}
Then there exists a  function $\hat{u}$, with
\begin{equation}\nonumber
\hat{u}\in H^1_{{\rm loc}}(Z),\ \hat{u}-u(0^-)\in L^6(Z^-), \ \hat{u}-u(0^+)\in L^6(Z^+), \ \nabla \hat{u} \in L^2(Z)^3,
\end{equation}
(where $u$ is defined in Corollary \ref{yhatarertek}), such that for every $R>0$, up to extraction of a subsequence,
$$\chi_{Z_\ep \cap B_3(0,R)}\hat{u}_\ep \to \chi _{B_3(0,R)}\hat{u} \ \ \mbox{ in } L^2(Z) \mbox{ strongly,}$$
$$\chi_{Z_\ep}\nabla \hat{u}_\ep \rightharpoonup \nabla \hat{u} \ \ \mbox{ in } L^2(Z)^3 \mbox{ weakly,}$$
where $B_3(0,R)$ denotes the 3-dimensional ball with center (0, 0, 0) and diameter $R$.
Moreover, if $\mu =0$, then $\hat{u}$ only depends on $z_1$ and satisfies
$$\hat{u}=u(0^-) \ \mbox{ in } Z^-, \ \ \hat{u}=u(0^+) \ \mbox{ in } Z^+.$$
If $\nu=\mu=0$, then $u(0^-)=u(0^+)$.\\
If $\nu =0$ and $\mu>0$, then there exists a function $\hat{w}\in L^2((-\mu, \mu);H^1(S))$ such that up to extraction of a subsequence,
\begin{equation}\nonumber\frac{r_\ep}{\ep}\nabla_{z'}\hat{u}_\ep \rightharpoonup \nabla_{z'}\hat{w} \ \ \mbox{ in } L^2(Z^0)^2 \mbox{ weakly.}\end{equation}
\end{theo}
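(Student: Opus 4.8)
The plan is to carry the uniform energy bound (\ref{felt3.1}) through the anisotropic change of variables $z=z_\ep(x)$ and then extract a limit by compactness. Since (\ref{ch2}) acts diagonally in the three coordinates, the chain rule gives $\nabla_{z'}\hat u_\ep=\ep r_\ep\,(\nabla_{x'}U_\ep)\circ z_\ep^{-1}$, while $\partial_{z_1}\hat u_\ep$ picks up the factor $\partial x_1/\partial z_1$, which equals $r_\ep^2$ in the notch and $\ep r_\ep$ in the main part when $\mu=0$, and $t_\ep/\mu$ in the notch and $\ep t_\ep/(\mu r_\ep)$ in the main part when $\mu>0$. Multiplying the squared gradient by the Jacobian of $z_\ep$, rewriting each block as an integral over $\Omega_\ep$, and using $|\Omega_\ep|\sim\ep^2$ together with $t_\ep/r_\ep^2\to\mu$ and $\ep/r_\ep\to\nu$, I would show that every block contributes a factor that is bounded precisely because $\mu<\infty$ and $\nu<\infty$; for instance the transverse part over the notch is comparable to $r_\ep^{-2}\int_{\Omega_\ep^0}|\nabla_{x'}U_\ep|^2\le C(\ep/r_\ep)^2\to C\nu^2$. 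The outcome is the a priori estimate $\int_{Z_\ep}|\nabla_z\hat u_\ep|^2\,dz\le C$, uniformly in $\ep$, which is the backbone of everything that follows.

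From this bound, weak $L^2$ compactness gives, along a subsequence, $\chi_{Z_\ep}\nabla_z\hat u_\ep\rightharpoonup G$ in $L^2(Z)^3$. On each ball $B_3(0,R)$ the family $\hat u_\ep$ is bounded in $H^1(Z_\ep\cap B_3(0,R))$---the gradient by the a priori estimate and the function itself by anchoring it, via the Poincar\'e-Wirtinger inequality on $S$ and the outer convergence of Theorem \ref{uephatar}, to the finite trace $u(0^\pm)$---so Rellich's theorem yields $\hat u_\ep\to\hat u$ strongly in $L^2_{\mathrm{loc}}$. Testing $\int\hat u_\ep\,\partial_j\varphi=-\int\partial_j\hat u_\ep\,\varphi$ against $\varphi\in C_c^\infty(Z)$ (valid once $\mathrm{supp}\,\varphi\subset Z_\ep$) and combining the strong and weak convergences identifies $G=\nabla\hat u$, so $\hat u\in H^1_{\mathrm{loc}}(Z)$ with $\nabla\hat u\in L^2(Z)^3$, and the two stated convergences hold.

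The integrability $\hat u-u(0^\pm)\in L^6(Z^\pm)$ I would obtain from the three-dimensional Sobolev inequality on the half-spaces $Z^\pm$: having $\nabla\hat u\in L^2$, the function $\hat u$ differs from a constant by an element of $L^6$, and that constant must be the far-field value. Identifying it as $u(0^\pm)$ is the delicate matching step: for a fixed $z$ the base point $z_\ep^{-1}(z)$ has first coordinate tending to $0^\pm$, so the strong convergence $U_\ep\to u(x_1)$ of Theorem \ref{uephatar} pins the behavior of $\hat u$ at infinity to $u(0^\pm)$. The hard part will be exactly this, since $Z^\pm$ are unbounded, Rellich compactness fails there, and one must use the homogeneous Sobolev embedding together with the outer matching---rather than a single compactness argument---to control the profile at infinity.

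It remains to treat the degenerate regimes. When $\mu=0$ the notch image $Z_\ep^0=[-t_\ep/r_\ep^2,t_\ep/r_\ep^2]\times S$ collapses onto $\{0\}\times S$; the matching of the previous step then forces $\hat u_\ep\to u(0^\pm)$ on $Z^\pm$, so $\hat u$ reduces to the piecewise constant profile $u(0^-)$ on $Z^-$ and $u(0^+)$ on $Z^+$, and in particular depends only on $z_1$. If moreover $\nu=0$, the origin lies in the interior of $Z$ through the non-degenerate window $\{0\}\times S$, and an $H^1_{\mathrm{loc}}$ function cannot jump across a set meeting the interior, which forces $u(0^-)=u(0^+)$. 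Finally, when $\nu=0$ and $\mu>0$ the set $Z^0=[-\mu,\mu]\times S$ is fixed; there $\nabla_{z'}\hat u_\ep\to 0$ in $L^2(Z^0)$ because the transverse bound $C(\ep/r_\ep)^2$ now vanishes, so $\hat u$ is $z'$-independent, while the rescaled sequence $\frac{r_\ep}{\ep}\nabla_{z'}\hat u_\ep$ stays bounded in $L^2(Z^0)^2$. Slicing in $z_1$ and applying the Poincar\'e-Wirtinger inequality on $S$ to $\frac{r_\ep}{\ep}\hat u_\ep(z_1,\cdot)$ minus its transverse average yields $\hat w\in L^2((-\mu,\mu);H^1(S))$ with $\frac{r_\ep}{\ep}\nabla_{z'}\hat u_\ep\rightharpoonup\nabla_{z'}\hat w$, as claimed.
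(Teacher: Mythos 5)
First, a point of comparison: the paper does not prove Theorem \ref{zhatarertek} at all --- it is quoted from Casado-D\'iaz, Luna-Laynez and Murat \cite{casado2} --- so there is no in-paper argument to measure yours against. Judged on its own, your sketch gets the quantitative skeleton right: the chain-rule factors for $z_\ep$, the Jacobians, and the resulting uniform bound $\int_{Z_\ep}|\nabla_z\hat u_\ep|^2\,dz\le C$ (the notch contributing $r_\ep^{-2}\int_{\Omega_\ep^0}|\nabla_{x'}U_\ep|^2\le C(\ep/r_\ep)^2$ and the main part $(\ep r_\ep)^{-1}\int_{\Omega_\ep^1}|\nabla U_\ep|^2\le C\ep/r_\ep$, both bounded precisely because $\mu,\nu<\infty$) are all correct, as is the identification of $\hat w$ by Poincar\'e--Wirtinger on $z_1$-slices in the regime $\nu=0$, $\mu>0$.

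Two steps, however, are asserted where the real work lies. (i) Rellich on $B_3(0,R)$ needs a uniform $L^2$ bound on $\hat u_\ep$ itself, and the membership $\hat u-u(0^\pm)\in L^6(Z^\pm)$ needs a Sobolev--Poincar\'e inequality whose constant is independent of the \emph{expanding} domains $Z_\ep^\pm$, together with an identification of the normalizing constants as cross-sectional averages of $U_\ep$ that converge to $u(0^\pm)$ via Theorem \ref{uephatar}. Your ``anchoring via Poincar\'e--Wirtinger and the outer convergence'' names the right ingredients but does not carry out either the uniform-domain Sobolev step or the passage from averages in the $y$-variables to averages in the $z$-variables; this matching is the technical heart of the theorem, not a remark. (ii) In the case $\mu=0$ you claim the far-field matching ``forces'' $\hat u\equiv u(0^\pm)$ on $Z^\pm$; it does not. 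A function on a half-space with $\nabla\hat u\in L^2$ and $\hat u-c\in L^6$ need not be constant, so constancy (equivalently, the claim that $\hat u$ depends only on $z_1$, after which the $L^6$ condition does force the constant value) requires a separate degeneration argument for the rescaled gradients on $Z^1_\ep$ in this regime. Relatedly, your proof that $u(0^-)=u(0^+)$ --- an $H^1_{\rm loc}$ function cannot jump across the window $\{0\}\times S$ --- nowhere uses the hypothesis $\nu=0$ that the statement imposes; that is a sign that the $H^1$-continuity of the limit across the window is itself something to be established, not read off from the formal assertion $\hat u\in H^1_{\rm loc}(Z)$.
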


Let $\hat{K}_\ep$ be the image of $M_\ep$ by the change of variables $z=z_\ep(x)$. $\hat{K}_\ep$  is a closed, convex, nonempty cone in $H^1(Z_\ep)$.
We need the following two assumptions:

{\vskip 0.3cm}

{\bf (A10)} There exists a nonempty subset $\hat{K}$ of $H^1_{{\rm loc}}(Z)$ such that

{\hskip 1.2cm} $\ep_i\to 0$, $R>0$, $\hat{u}_{\ep_i}\in \hat{K}_{\ep_i}$, $\hat{u}\in H^1_{{\rm loc}}(Z)$, $$\chi_{Z_\ep \cap B_3(0,R)}\hat{u}_{\ep_i}\to \chi_{B_3(0,R)}\hat{u} \ \mbox{ (strongly) in } L^2(Z),$$

and {\hskip 1.2cm} $$\chi_{Z_\ep}\nabla\hat{u}_{\ep_i}\rightharpoonup \nabla \hat{u} \ \mbox{ (weakly) in } (L^2(Z))^3,$$

imply $\hat{u}\in \hat{K}$.


{\vskip 0.3cm}

{\bf (A11)} There exists a nonempty, convex cone $\hat{L}$ in $L^2((-\mu,\mu);H^1(S))$ such that

{\hskip 1.2cm} $\ep_i\to 0$, $\hat{w}_{\ep_i}\in K_{\ep_i}$, $\hat{w}\in L^2((-\mu,\mu);H^1(S))$, $\hat{w}_{\ep_i}\rightharpoonup \hat{w}$ (weakly) in

{\hskip 1.2cm}  $L^2((-\mu,\mu);H^1(S))$ imply $\hat{w}\in \hat{L}$.

{\vskip 0.3cm}

\begin{theo}\label{enyemz}
 Let $U_\ep\in M_\ep$ be the solution of the variational inequality (\ref{nagyegyen}), $u\in H^1((-1,0)\cup(0,1))\cap K$ defined in Theorem \ref{yhatarertek}, and $\hat{u}_\ep\in \hat{K}_\ep$ given by (\ref{kalapuep}).
If assumptions (A1)-(A6) and (A8)-(A11) are verified, then there exists a  function $\hat{u}\in \hat{K}$, with
\begin{equation}
\hat{u}-u(0^-)\in L^6(Z^-), \ \hat{u}-u(0^+)\in L^6(Z^+), \ \nabla \hat{u} \in L^2(Z)^3,
\end{equation}
 such that for every $R>0$, up to extraction of a subsequence,
$$\chi_{Z_\ep \cap B_3(0,R)}\hat{u}_\ep \to \chi _{B_3(0,R)}\hat{u} \ \ \mbox{ in } L^2(Z) \mbox{ strongly,}$$
$$\chi_{Z_\ep}\nabla \hat{u}_\ep \rightharpoonup \nabla \hat{u} \ \ \mbox{ in } L^2(Z)^3 \mbox{ weakly.}$$
Moreover, if $\mu =0$, then $\hat{u}$ only depends on $z_1$ and satisfies
$$\hat{u}=u(0^-) \ \mbox{ in } Z^-, \ \ \hat{u}=u(0^+) \ \mbox{ in } Z^+.$$
If $\nu=\mu=0$, then $u(0^-)=u(0^+)$.\\
If $\nu =0$ and $\mu>0$, then there exists a function $\hat{w}\in \hat{L}$ such that up to extraction of a subsequence,
\begin{equation}\label{whatar}\frac{r_\ep}{\ep}\nabla_{z'}\hat{u}_\ep \rightharpoonup \nabla_{z'}\hat{w} \ \ \mbox{ in } L^2(Z^0)^2 \mbox{ weakly.}\end{equation}
\end{theo}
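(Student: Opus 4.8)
The plan is to obtain Theorem \ref{enyemz} by combining the a priori estimate of Lemma \ref{corfelt3.1}, the compactness result of Theorem \ref{zhatarertek}, and the closure hypotheses (A10)--(A11). The point is that every convergence statement and every case-dependent structural conclusion in Theorem \ref{enyemz} ($\hat u$ depending only on $z_1$ when $\mu=0$, the matching $u(0^-)=u(0^+)$ when $\nu=\mu=0$, and the existence of $\hat w$ with (\ref{whatar}) when $\nu=0,\ \mu>0$) is literally the corresponding assertion of Theorem \ref{zhatarertek}. The only genuinely new content is the two memberships $\hat u\in\hat K$ and $\hat w\in\hat L$, so the whole proof reduces to producing the limit via the earlier results and then feeding the resulting convergences into (A10) and (A11).

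First I would note that, because $U_\ep$ solves (\ref{nagyegyen}) and (A1)--(A6) hold, Lemma \ref{corfelt3.1} delivers the uniform bound (\ref{felt3.1}). This is exactly the hypothesis of Theorem \ref{zhatarertek}, which I then apply to the sequence $(\hat u_\ep)_\ep$ of (\ref{kalapuep}): after extraction it yields $\hat u\in H^1_{\rm loc}(Z)$ with $\hat u-u(0^\pm)\in L^6(Z^\pm)$ and $\nabla\hat u\in L^2(Z)^3$, the strong $L^2$ convergence $\chi_{Z_\ep\cap B_3(0,R)}\hat u_\ep\to\chi_{B_3(0,R)}\hat u$ for each $R>0$, the weak convergence $\chi_{Z_\ep}\nabla\hat u_\ep\rightharpoonup\nabla\hat u$, and all the case distinctions. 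At this stage every conclusion of the theorem except $\hat u\in\hat K$ and $\hat w\in\hat L$ is already in hand.

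Next I would establish $\hat u\in\hat K$. Since $\hat K_\ep$ is by definition the image of $M_\ep$ under $z=z_\ep(x)$ and $U_\ep\in M_\ep$, we have $\hat u_\ep\in\hat K_\ep$ for every $\ep$. The two convergences just produced are precisely the premises of (A10), so (A10) gives $\hat u\in\hat K$ at once, with no further work.

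The delicate step is $\hat w\in\hat L$ in the regime $\nu=0,\ \mu>0$. Here Theorem \ref{zhatarertek} supplies only the transverse-gradient convergence (\ref{whatar}), whereas (A11) asks for a sequence converging weakly in $L^2((-\mu,\mu);H^1(S))$. To bridge this gap I would work with the rescaled cross-sectional fluctuation $\hat w_\ep:=\frac{r_\ep}{\ep}\bigl(\hat u_\ep-\langle\hat u_\ep\rangle_S\bigr)$ on $Z^0_\ep=(-\mu,\mu)\times S$, where $\langle\cdot\rangle_S$ is the mean over the fixed cross-section $S$; the subtraction of the constant $\frac{r_\ep}{\ep}\langle\hat u_\ep\rangle_S$ in each fibre is needed because $\frac{r_\ep}{\ep}\to+\infty$, so $\frac{r_\ep}{\ep}\hat u_\ep$ itself cannot stay bounded in $L^2$. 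Since $S$ admits the Poincar\'e--Wirtinger inequality, $\|\hat w_\ep\|_{L^2((-\mu,\mu);H^1(S))}$ is controlled by $\frac{r_\ep}{\ep}\|\nabla_{z'}\hat u_\ep\|_{L^2(Z^0)}$, which is bounded thanks to (\ref{whatar}). Hence $(\hat w_\ep)$ is bounded in $L^2((-\mu,\mu);H^1(S))$ and, along a further subsequence, converges weakly there to a limit whose $z'$-gradient agrees with $\nabla_{z'}\hat w$ by (\ref{whatar}); fixing the representative of $\hat w$ to have zero cross-sectional mean identifies this limit with $\hat w$. As $\hat w_\ep$ is built from $U_\ep\in M_\ep$ by the same rescaling used in \cite{casado2} to define the transverse corrector, it lies in the cone $K_\ep$, and (A11) then yields $\hat w\in\hat L$. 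I expect this Poincar\'e--Wirtinger passage -- turning the gradient bound (\ref{whatar}) into a full $L^2((-\mu,\mu);H^1(S))$ bound and checking that the normalized fluctuation remains in the admissible cone -- to be the only real obstacle; the remainder of the argument is the bookkeeping of invoking Lemma \ref{corfelt3.1}, Theorem \ref{zhatarertek}, and assumptions (A10)--(A11).
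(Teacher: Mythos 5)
Your proposal follows the same route as the paper: the paper's proof is exactly the three-step chain you describe --- Lemma \ref{corfelt3.1} gives the bound (\ref{felt3.1}), Theorem \ref{zhatarertek} then supplies $\hat u$ together with all the convergences and case distinctions, and (A10), (A11) are invoked to conclude $\hat u\in\hat K$ and $\hat w\in\hat L$. The only place you diverge is the last step: the paper applies (A11) directly to the function $\hat w$ produced by Theorem \ref{zhatarertek}, with no construction of an actual sequence converging weakly in $L^2((-\mu,\mu);H^1(S))$, whereas you build the normalized fluctuations $\hat w_\ep=\frac{r_\ep}{\ep}\bigl(\hat u_\ep-\langle\hat u_\ep\rangle_S\bigr)$ and use Poincar\'e--Wirtinger to get the required bound. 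Your instinct is sound --- the hypotheses of (A11) really do ask for a weakly convergent sequence lying in the cone, and the paper glosses over producing one --- but note that your own bridge has a soft spot you should not wave away: $M_\ep$ (hence $K_\ep$) is only assumed to be a cone, so it is closed under multiplication by $\frac{r_\ep}{\ep}>0$ but not under subtraction of the cross-sectional means $\langle\hat u_\ep\rangle_S$; the claim $\hat w_\ep\in K_\ep$ therefore does not follow from the cone property alone and would need an extra hypothesis on $M_\ep$ (e.g.\ invariance under adding functions of $x_1$ alone). Since the paper itself does not address this point either, your write-up is at least as complete as the published proof, and the rest is the same bookkeeping.
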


\begin{proof}
From Lemma \ref{corfelt3.1} it follows that there exists a subsequence of solutions $U_\ep$, also denoted by $U_\ep$, such that (\ref{felt3.1}) is satisfied. Thus by Theorem \ref{zhatarertek} we get that there exists a function $\hat{u} \in H^1_{{\rm loc}}(Z)$ such that the statement of the theorem is true. By assumption (A10) we get that $\hat{u}\in \hat{K}$.

If $\nu =0$ and $\mu>0$ then,  by Theorem \ref{zhatarertek}, there exists a function $\hat{w}\in L^2((-\mu, \mu);H^1(S))$ such that up to extraction of a subsequence, (\ref{whatar}) holds. Then by assumption (A11) we get that $\hat{w}\in \hat{L}$.
\end{proof}


\begin{lem}\label{lsigma0}
Let $U_\ep$ be one solution of the variational inequality (\ref{nagyegyen}), $\hat{u}_\ep$ defined by (\ref{ch2}). Assume that (A1)-(A3) and  (A5) hold. Then
$$\left\|A^0\left(\frac{\cdot}{\mu},\cdot\right)\Phi_\ep^0\left(\frac{\cdot}{\mu},\cdot,\hat{u}_\ep(\cdot)\right)B^0\left(\frac{\cdot}{\mu},\cdot\right)\nabla \hat{u}_\ep(\cdot) \right\|_{L^2(Z^0)} $$ is bounded.
\end{lem}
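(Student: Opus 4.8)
The plan is to obtain the bound pointwise from the growth condition (A5), rewritten in the rescaled $z$-variables, and then to control the three resulting terms by estimates already available for the solution. Throughout I restrict to $\mu>0$, where the factor $1/\mu$ is meaningful and $Z^0=Z_\ep^0=[-\mu,\mu]\times S$ is a \emph{fixed}, bounded set independent of $\ep$; for $\mu=0$ the set $Z^0$ is degenerate and the statement is vacuous.

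First I would transport (A5) through the change of variables (\ref{ch2}). On $\Omega_\ep^0$ we have $A_\ep\Phi_\ep(x,\eta)=A^0\Phi^0_\ep$ by (A1) and (A3), and for $x\in\Omega_\ep^0$ the substitution $z=z_\ep(x)$ identifies $\frac{x_1}{t_\ep}=\frac{z_1}{\mu}$ and $\frac{x'}{\ep r_\ep}=z'$, so that $A^0,B^0,\Phi^0_\ep$ are all evaluated at $(\frac{z_1}{\mu},z')$. Since (A5) holds for \emph{every} $\xi\in\R^3$ and $\eta\in\R$, I specialise it at $\eta=\hat u_\ep(z)$ and at the vector $\xi=B^0(\frac{z_1}{\mu},z')\nabla\hat u_\ep(z)$; the left-hand side then becomes precisely the integrand of the lemma, and, using $B^0\in L^\infty$ (A2) to write $\|\xi\|\le\|B^0\|_{L^\infty}\|\nabla\hat u_\ep\|$, one gets for a.e. $z\in Z^0$
\begin{equation}\nonumber
\bigl\|A^0\Phi^0_\ep B^0\nabla\hat u_\ep\bigr\|\le C\,\|B^0\|_{L^\infty}\,\|\nabla\hat u_\ep(z)\|+C\,|\hat u_\ep(z)|+\alpha\bigl(z_\ep^{-1}(z)\bigr).
\end{equation}

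Squaring, using $(a+b+c)^2\le 3(a^2+b^2+c^2)$, and integrating over the fixed bounded set $Z^0$ reduces the claim to the uniform (in $\ep$) boundedness of the three quantities $\|\nabla\hat u_\ep\|_{L^2(Z^0)}$, $\|\hat u_\ep\|_{L^2(Z^0)}$ and $\|\alpha(z_\ep^{-1}(\cdot))\|_{L^2(Z^0)}$. The first two are supplied directly by Theorem \ref{zhatarertek}, whose hypothesis (\ref{felt3.1}) holds for the solution $U_\ep$ by Lemma \ref{corfelt3.1}: the weak convergence $\chi_{Z_\ep}\nabla\hat u_\ep\rightharpoonup\nabla\hat u$ in $L^2(Z)^3$ makes $\nabla\hat u_\ep$ norm-bounded, and since $Z^0\subset Z_\ep$ for $\mu>0$ this bounds $\|\nabla\hat u_\ep\|_{L^2(Z^0)}$; taking $R$ with $Z^0\subset B_3(0,R)$, the strong convergence $\chi_{Z_\ep\cap B_3(0,R)}\hat u_\ep\to\chi_{B_3(0,R)}\hat u$ in $L^2(Z)$ bounds $\|\hat u_\ep\|_{L^2(Z^0)}$. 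The third term is at most $\|\alpha\|_{L^\infty}^2\,|Z^0|=2\mu|S|\,\|\alpha\|_{L^\infty}^2<\infty$, by $\alpha\in L^\infty$ (A5) and $|Z^0|<\infty$. Combining the three gives the asserted bound.

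The argument is essentially bookkeeping; the only point needing care is the uniform boundedness of $\nabla\hat u_\ep$ on $Z^0$ and the $\ep$-independence of the constants $C$, $\|B^0\|_{L^\infty}$, $\|\alpha\|_{L^\infty}$ (granted by (A2) and (A5)). If one prefers not to invoke Theorem \ref{zhatarertek} for the gradient, it can be obtained directly from (\ref{felt3.1}) via the Jacobian of (\ref{ch2}): the longitudinal part of $\int_{Z^0}|\nabla\hat u_\ep|^2$ scales like $\frac{t_\ep}{\mu(\ep r_\ep)^2}\int_{\Omega_\ep^0}|\frac{\partial U_\ep}{\partial x_1}|^2\,dx$ and the transverse part like $\frac{\mu}{t_\ep}\int_{\Omega_\ep^0}|\nabla_{x'}U_\ep|^2\,dx$; bounding $\int_{\Omega_\ep^0}|\nabla U_\ep|^2\le C|\Omega_\ep|$ and using $|\Omega_\ep|\sim\ep^2|S|$ together with $\frac{t_\ep}{r_\ep^2}\to\mu$ and $\frac{\ep}{r_\ep}\to\nu$ (whence $\frac{\ep^2}{t_\ep}\to\frac{\nu^2}{\mu}$) shows both remain bounded.
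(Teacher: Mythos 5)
Your proof is correct and follows essentially the same route as the paper: apply the growth condition (A5), square, integrate over the fixed bounded set $Z^0$, and conclude from the uniform $L^2(Z^0)$ bounds on $\hat{u}_\ep$ and $\nabla\hat{u}_\ep$ supplied by Theorem \ref{zhatarertek} (via Lemma \ref{corfelt3.1}) together with $\alpha\in L^\infty$. The only difference is bookkeeping: you invoke (A5) pointwise in the $z$-variables with $\xi=B^0\nabla\hat{u}_\ep(z)$, which sidesteps the $\frac{1}{\ep^2}$-weighted integration over $\Omega^0_\ep$ and the Jacobian/rescaling step of the paper, and delivers exactly the stated bound for the plain gradient $\nabla\hat{u}_\ep$ (the paper's own computation, after the change of variables, actually carries the anisotropic gradient $(\partial_{z_1}\hat{u}_\ep,\frac{r_\ep}{\ep}\nabla_{z'}\hat{u}_\ep)$ on the right-hand side, a discrepancy internal to the paper rather than to your argument).
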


\begin{proof}
Taking the square of the first growth condition from (A5), multiplying by $\frac{1}{\ep^2}$, and integrating on $\Omega^0_\ep$, we obtain
\begin{align*}
&\frac{1}{\ep^2}\int_{\Omega^0_\ep}\|A_\ep(x) \Phi(x, U_\ep(x))B_\ep(x)\nabla U_\ep(x) \|^2\d x \leq \\
&\leq \frac{1}{\ep^2}\int_{\Omega^0_\ep}\| \nabla U_\ep (x)\|^2 \d x + \frac{1}{\ep^2}\int_{\Omega^0_\ep}|  U_\ep (x)|^2 \d x + \frac{|\Omega^0_\ep|}{\ep^2} \|\alpha\|_\infty.
\end{align*}
Applying the change of variable $z_\ep$ and taking out $\frac{1}{r^2_\ep}$ from $\hat{\nabla}^\ep \hat{u}_\ep$, we get
\begin{align*}
&\int_{Z^0}\left\|A^0\left(\frac{z_1}{\mu},z'\right)\Phi_\ep^0\left(\frac{z_1}{\mu},z',\hat{u}_\ep(z)\right)B^0\left(\frac{z_1}{\mu},z'\right)\nabla \hat{u}_\ep(z) \right\|^2\d z \leq \\
&\leq  C \int_{Z^0}\left\|\left(\frac{\partial \hat{u}_\ep(z)}{\partial z_1},\frac{r_\ep}{\ep}\frac{\partial \hat{u}_\ep(z)}{\partial z_2},\frac{r_\ep}{\ep}\frac{\partial \hat{u}_\ep(z)}{\partial z_3} \right) \right\|^2 \d z + r^4_\ep C \int_{Z^0}|\hat{u}_\ep(z)|^2 \d z +\bar\alpha.
\end{align*}
By Theorem \ref{zhatarertek}, $\|\nabla \hat{u}_\ep\|_{L^2(Z^0)^3}$ and $\|\hat{u}_\ep\|_{L^2(Z^0)}$ are bounded, thus the statement of the lemma holds.
\end{proof}

\begin{cor}\label{sigma0}
Suppose that the assumptions of Lemma \ref{lsigma0} are verified. Then there exists $\sigma^0 \in L^2(Z^0)$ such that
$$A^0\left(\frac{\cdot}{\mu},\cdot\right)\Phi_\ep^0\left(\frac{\cdot}{\mu},\cdot,\hat{u}_\ep(\cdot)\right)B^0\left(\frac{\cdot}{\mu},\cdot\right)\nabla \hat{u}_\ep(\cdot)  \rightharpoonup \sigma^0 \ \mbox{ in } L^2(Z^0).$$
\end{cor}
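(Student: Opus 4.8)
The plan is to deduce the weak convergence directly from the uniform bound already established in Lemma \ref{lsigma0}, using the weak sequential compactness of bounded sets in the Hilbert space $L^2(Z^0)$. First I would set
$$\sigma_\ep^0 := A^0\left(\frac{\cdot}{\mu},\cdot\right)\Phi_\ep^0\left(\frac{\cdot}{\mu},\cdot,\hat{u}_\ep(\cdot)\right)B^0\left(\frac{\cdot}{\mu},\cdot\right)\nabla \hat{u}_\ep(\cdot),$$
and observe that Lemma \ref{lsigma0} asserts precisely that $(\sigma_\ep^0)_\ep$ is bounded in $L^2(Z^0)$ uniformly in $\ep$.

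Since $L^2(Z^0)$ is a Hilbert space, it is reflexive, so by the Banach--Alaoglu theorem (equivalently, by the weak sequential compactness of bounded sets in a reflexive space) the bounded sequence $(\sigma_\ep^0)_\ep$ admits a weakly convergent subsequence. Thus, up to extraction of a subsequence which I would not relabel, there exists a limit $\sigma^0 \in L^2(Z^0)$ with $\sigma_\ep^0 \rightharpoonup \sigma^0$ weakly in $L^2(Z^0)$, which is exactly the claimed statement. (The quantity is $\R^3$-valued, so the argument is carried out componentwise, or equivalently in $L^2(Z^0)^3$, in keeping with the notation of Lemma \ref{lsigma0}.)

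I do not expect a substantial obstacle here, as the result is essentially immediate once the uniform bound of Lemma \ref{lsigma0} is available. The only point that requires attention is the bookkeeping of subsequences: the bounds on $\|\nabla \hat{u}_\ep\|_{L^2(Z^0)^3}$ and $\|\hat{u}_\ep\|_{L^2(Z^0)}$ used in Lemma \ref{lsigma0} already hold only along a subsequence fixed in Theorem \ref{zhatarertek}, so the extraction performed here must be a further extraction from that subsequence to keep all convergences simultaneously valid. The weak limit $\sigma^0$ obtained in this way is then recorded for use in the identification of the limit problem in the notch region $Z^0$.
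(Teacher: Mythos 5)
Your argument is correct and coincides with the paper's intended (and essentially only) proof: the corollary is deduced directly from the uniform bound of Lemma \ref{lsigma0} via weak sequential compactness of bounded sets in the Hilbert space $L^2(Z^0)$, with the limit taken along a further subsequence. The paper in fact states the corollary without written proof, so your careful remark about extracting a sub-subsequence compatible with Theorem \ref{zhatarertek} is a welcome addition rather than a deviation.
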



\section{The limit variational inequality}



In this section we obtain the limit problem in two cases: when $0<\mu<+\infty$ and $\nu=0$ respectively when $\mu = +\infty$ and $0<\nu<+\infty$. In these cases
$$\frac{\ep r_\ep}{t_\ep} = \frac{\ep}{r_\ep} \cdot \frac{r^2_\ep}{t_\ep} \to \frac{\nu}{\mu} = 0,$$
thus the beam has a thin neck.

{\vskip 0.3cm}

\subsection{The case $0<\mu<\infty$ and $\nu=0$}

\begin{theo}\label{limit2} Let $0<\mu<\infty$ and $\nu=0$.

Assume that  (A1)-(A11) are verified and the following four conditions are satisfied:

{\vskip 0.2cm}

{\bf (C1)}  $\varphi \in K$ implies $\chi_{Y_\ep^1}\varphi \in K_\ep$;

{\vskip 0.2cm}

{\bf (C2)}  $\psi \in L$ implies $\chi_{Y_\ep^1}\psi \in K_\ep$;

{\vskip 0.2cm}

{\bf (C3)}  $\hat\varphi \in \hat{K}$ implies $\chi_{Z_\ep^0}\hat\varphi \in \hat{K}_\ep$;

{\vskip 0.2cm}

 {\bf (C4)}  $\hat\psi \in \hat{L}$ implies $\chi_{Z_\ep^0}\hat\psi \in \hat{K}_\ep$.

 {\vskip 0.2cm}

Then the following three statements hold:

{\vskip 0.2cm}

1) There exists a subsequence of the sequence $U_\ep$ of solutions of (\ref{nagyegyen}), also denoted by $U_\ep$, and a function $u\in H^1((-1,0)\cup (0,1))\cap K$ such that (\ref{foeq}) is satisfied.

{\vskip 0.2cm}

2) Let  $u$ and $w$ be as given in Theorem \ref{yhatarertek} and $\hat{u}$ and $\hat{w}$ as in Theorem \ref{enyemz}. Then
$(u, w, \hat{u}, \hat{w})$ solves the limit variational problem:\\
find $u\in H^1((-1,0)\cup (0,1))\cap K$, $u(-1)=u(1)=0$, $w\in L$,  and $\hat{u}\in \hat{K}$, $\hat{u}(-\mu)=u(0^-)$,  $\hat{u}(\mu)=u(0^+)$, $\hat{w}\in \hat{L}$ such that for all
$v\in H^1((-1,0)\cup (0,1))\cap K$, $v(-1)=v(1)=0$, $h\in L$,  and $\hat{v}\in \hat{K}$, $\hat{v}(-\mu)=v(0^-)$,  $\hat{v}(\mu)=v(0^+)$,  $\hat{h}\in \hat{L}$,

\begin{align}\label{limithengeres0}
&\int_{Y^1}[A^1(y) \Phi^1(y,u(y_1))B^1(y)\nabla'(u, w)(y),\nabla'(v, h)(y)-\nabla'(u, w)(y)]\\
 &+ \int_{Z^0}\left[A^0\left(\frac{z_1}{\mu},z'\right) \Phi^0\left(\frac{z_1}{\mu},z',\hat{u}(z)\right)B^0\left(\frac{z_1}{\mu},z'\right)\nabla'(\hat{u},\hat{w})(z),\right.\nonumber\\
 &\left. \nabla'(\hat{v},\hat{h})(z)-\nabla'(\hat{u},\hat{w})(z)
  \right] \d z \geq 0.\nonumber
\end{align}

{\vskip 0.2cm}

3) Let $\sigma^1$ be as given in Theorem \ref{yhatarertek}, $\sigma^0$ as given in Corollary \ref{sigma0}. Then $$\sigma^1 (y) = A^1(y) \Phi^1(y,u(y))B^1(y)\nabla'(u,w)(y) \ \ \mbox{ for a.e. } y\in Y^1,$$
$$\sigma^0(z) = A^0\left(\frac{z_1}{\mu},z'\right) \Phi^0\left(\frac{z_1}{\mu},z',\hat{u}(z)\right) B^0\left(\frac{z_1}{\mu},z'\right) \nabla' \left(\hat{u},\frac{1}{\nu}\hat{u}\right)$$
$  \ \ \mbox{ for a.e. } z\in Z^0.$

\end{theo}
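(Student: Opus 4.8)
The plan is to dispose of 1) immediately and to obtain 2) and 3) together from a single monotonicity (Minty) argument carried out after the two changes of variables. Statement 1) requires no new work: by Lemma \ref{corfelt3.1} the chosen solutions obey the uniform bound (\ref{felt3.1}); Theorem \ref{yhatarertek} then produces $u\in H^1((-1,0)\cup(0,1))\cap K$ together with $w$ and $\sigma^1$, and Theorem \ref{uephatar} yields the strong convergence (\ref{foeq}). Passing once more to a subsequence, I would arrange that the conclusions of Theorems \ref{yhatarertek}, \ref{enyemz} and of Corollary \ref{sigma0} all hold along the same subsequence; the limit $u$ of (\ref{foeq}) is the required function.

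For 2) I would first record the flux $\sigma_\ep:=A_\ep\Phi_\ep(\cdot,U_\ep)B_\ep\nabla U_\ep$ and note that, $M_\ep$ being a convex cone, inserting $V_\ep=2U_\ep$ and $V_\ep=\frac12U_\ep$ into (\ref{nagyegyen}) gives the energy identity $\int_{\Omega_\ep}[\sigma_\ep,\nabla U_\ep]\,\d x=0$, so that (\ref{nagyegyen}) reduces to $\int_{\Omega_\ep}[\sigma_\ep,\nabla V_\ep]\,\d x\geq0$ for every $V_\ep\in M_\ep$. Next I would build admissible oscillating test functions from limit data $(v,h,\hat v,\hat h)$: in the $y$-variables on the cylinder part the test is $v(y_1)+\ep\,h(y_1,y')$, whose scaled gradient $\nabla^\ep$ converges to $\nabla'(v,h)=(\partial_{y_1}v,\nabla_{y'}h)$, and in the $z$-variables on the notch a correspondingly corrected version of $\hat v$ whose scaled gradient converges to $\nabla'(\hat v,\hat h)$. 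The interface conditions $\hat v(\mp\mu)=v(0^\mp)$ make the two pieces glue into an $H^1$ function, while (C1)--(C4) and the cone property guarantee that this function is admissible (its $y$- and $z$-images lie in $K_\ep$ and $\hat K_\ep$). Splitting $\int_{\Omega_\ep}[\sigma_\ep,\nabla V_\ep]$ over $\Omega_\ep^1$ and $\Omega_\ep^0$, changing variables and normalising by $|\Omega_\ep|$, both contributions survive: in the cylinder because $\frac1\ep\nabla_{y'}u_\ep\rightharpoonup\nabla_{y'}w$, and in the notch (here $\nu=0$, $\mu>0$) because $\frac{r_\ep}{\ep}\nabla_{z'}\hat u_\ep\rightharpoonup\nabla_{z'}\hat w$ (Theorem \ref{enyemz}) together with $\frac{r_\ep^2}{t_\ep}\to\frac1\mu$.

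The genuine obstacle is the nonlinear term, for which I would run the monotonicity argument. Fixing smooth two-scale fields realised as $\Theta_\ep$ through the two changes of variables, with limits $\Theta^1$ on $Y^1$ and $\Theta^0$ on $Z^0$, assumption (A6) gives
$$\frac{1}{|\Omega_\ep|}\int_{\Omega_\ep}\big[A_\ep\Phi_\ep(\cdot,U_\ep)B_\ep\nabla U_\ep-A_\ep\Phi_\ep(\cdot,U_\ep)B_\ep\Theta_\ep,\ \nabla U_\ep-\Theta_\ep\big]\,\d x\geq0 .$$
In the limit the energy term vanishes by the identity above; the term $A_\ep\Phi_\ep(\cdot,U_\ep)B_\ep\Theta_\ep$ converges strongly by (A7) and the strong convergence $U_\ep\to u$, while $\sigma_\ep$ and $\nabla U_\ep$ converge weakly to $\sigma^1,\sigma^0$ and to $\nabla'(u,w),\nabla'(\hat u,\hat w)$ (Theorems \ref{yhatarertek}, \ref{enyemz}, Corollary \ref{sigma0}). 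Letting $\Theta^1=\nabla'(u,w)\pm t\,\Xi^1$ and $\Theta^0=\nabla'(\hat u,\hat w)\pm t\,\Xi^0$ and sending $t\to0$ (Minty) forces
$$\sigma^1=A^1\Phi^1(\cdot,u)B^1\nabla'(u,w),\qquad \sigma^0=A^0\Phi^0(\cdot,\hat u)B^0\nabla'(\hat u,\hat w),$$
which is statement 3). Substituting these identities, the limit of $\int_{\Omega_\ep}[\sigma_\ep,\nabla V_\ep]\geq\int_{\Omega_\ep}[\sigma_\ep,\nabla U_\ep]$ becomes exactly (\ref{limithengeres0}), proving 2). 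The two delicate points to watch are that the test and corrector fields must be chosen so that $\Theta_\ep$ is genuinely admissible and strongly convergent under \emph{both} changes of variables, and that the weak limits of $\sigma_\ep$ on $Y^1$ and on $Z^0$ match correctly across the interfaces, which is precisely what the conditions $\hat u(\mp\mu)=u(0^\mp)$ encode.
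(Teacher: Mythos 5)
Your proposal is correct and follows essentially the same route as the paper: two-scale test functions built from $(\varphi,\psi,\hat\varphi,\hat\psi)$ via the changes of variables $y_\ep$ and $z_\ep$, the monotonicity condition (A6) with a perturbed comparison field, (A7) for the strong convergence of the nonlinear coefficient, and the convergences of Theorems \ref{yhatarertek} and \ref{enyemz} and Corollary \ref{sigma0} to pass to the limit, followed by a Minty-type specialization to obtain both the limit inequality and the flux identifications. The only differences are organizational --- you make the cone energy identity $\int_{\Omega_\ep}[\sigma_\ep,\nabla U_\ep]\,\d x=0$ explicit and establish 3) before 2), whereas the paper derives a single combined inequality (\ref{limitseged2}) and specializes it twice (once in $\theta$ for the inequality, once in $\lambda$ for the identification) --- and these do not change the substance of the argument.
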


\begin{proof} Statement 1) follows from Theorem \ref{uephatar}.

2)
Since $\nu=0$, from Theorem \ref{enyemz} it follows that $\hat{u}\in \hat{K}$ only depends on $z_1$  with
$$\hat{u}=u(0^-) \ \mbox{ in } Z^-, \ \ \hat{u}=u(0^+) \ \mbox{ in } Z^+,$$
and there exists a function $\hat{w}\in \hat{L}$ such that up to extraction of a subsequence,
$$\frac{r_\ep}{\ep}\nabla_{z'}\hat{u}_\ep \rightharpoonup \nabla_{z'}\hat{w} \ \ \mbox{ in } L^2(Z^0)^2 \mbox{ weakly.}$$

Let $\varphi^-\in H^1([-1,0])$ and $\varphi^+\in H^1([0,1])$
and define $\varphi \in H^1((-1,0)\cup(0,1))\cap K$ such that
\begin{equation}\nonumber
\varphi(x_1)=\begin{cases}
\varphi^-(x_1), \ \ \mbox{ if } \ x_1\in (-1,0)\\
\varphi^+(x_1), \ \ \mbox{ if } \ x_1\in (0,1).
\end{cases}
\end{equation}
Let  $\psi\in L$,
 $\hat{\varphi}\in \hat{K}$, and
 $\hat{\psi}\in \hat{L}$.
For $\ep$ small enough, the sequence $V_\ep$ defined by
\begin{align*}
V_\ep(x) & = \chi_{\Omega_\ep^1}(x)\left(\varphi(x_1) + \ep \psi \left(x_1, \frac{x'}{\ep} \right) \right)+\\
&+ \chi_{\Omega_\ep^0}(x)\left(\hat\varphi\left(\frac{\mu x_1}{t_\ep}\right) + \frac{\ep}{r_\ep} \hat\psi \left(\frac{\mu x_1}{t_\ep}, \frac{x'}{\ep r_\ep} \right) \right), \ \ \mbox{ a.e. } \ x\in \Omega_\ep
\end{align*}
belongs to $M_\ep$.

Putting $\eta=U_\ep(x)$, $\xi=\nabla U_\ep(x)$ and
\begin{align*}
\tau=\tau_\ep(x)&=\chi_{\Omega_\ep^1}(x) (\nabla'(\varphi, \psi)+\lambda f_1)(y_\ep(x))+\\
&+\chi_{\Omega_\ep^0}(x)\frac{1}{r^2_\ep}(\nabla'(\hat{\varphi},\hat{\psi})+\lambda f_2)(z_\ep(x)), \ \mbox{ a.e. } x\in \Omega_\ep
\end{align*}
 in the monotonicity condition, we get
\begin{align*}
0&\leq \frac{1}{\ep^2} \int_{\Omega_\ep}\left[A_\ep(x) \Phi_\ep(x, U_\ep(x)) B_\ep(x) \nabla U_\ep(x)-A_\ep(x) \Phi_\ep(x, U_\ep(x)) B_\ep(x) \tau_\ep(x), \right.\\
&\ \ \ \ \ \ \ \ \ \ \ \ \ \left. \nabla U_\ep(x)-\tau_\ep(x) \right] \d x =\\
&= \frac{1}{\ep^2} \int_{\Omega_\ep}\left[A_\ep(x) \Phi_\ep(x, U_\ep(x)) B_\ep(x) \nabla U_\ep(x), \nabla U_\ep(x)\right]\d x - \\
&- \frac{1}{\ep^2} \int_{\Omega_\ep}\left[A_\ep(x) \Phi_\ep(x, U_\ep(x)) B_\ep(x) \nabla U_\ep(x), \tau_\ep(x)\right]\d x +\\
&- \frac{1}{\ep^2} \int_{\Omega_\ep}\left[A_\ep(x) \Phi_\ep(x, U_\ep(x)) B_\ep(x) \tau_\ep(x), \nabla U_\ep(x)\right]\d x - \\
&+\frac{1}{\ep^2} \int_{\Omega_\ep}\left[A_\ep(x) \Phi_\ep(x, U_\ep(x)) B_\ep(x) \tau_\ep(x), \tau_\ep(x)\right]\d x =\\
&= T_1^\ep - T_2^\ep - T_3^\ep + T_4^\ep.
\end{align*}
In the following we study each term separately. The first term
\begin{eqnarray}
T_1^\ep & = &\frac{1}{\ep^2} \int_{\Omega_\epsilon}\left[A_\epsilon(x) \Phi_\epsilon(x,U_\epsilon(x))B_\epsilon(x)\nabla U_\epsilon(x),\nabla U_\epsilon(x) \right] \d x\leq\nonumber\\
&\leq & \frac{1}{\ep^2}\int_{\Omega_\epsilon}\left[A_\epsilon(x) \Phi_\epsilon(x,U_\epsilon(x))B_\epsilon(x)\nabla U_\epsilon(x),\nabla V_\epsilon(x)  \right] \d x \nonumber\\
&=&\frac{1}{\ep^2}\int_{\Omega_\epsilon^1}\left[A^1_\epsilon(y_\ep(x)) \Phi_\epsilon^1(y_\ep(x),U_\epsilon(x))B_\epsilon^1(y_\ep(x))\nabla U_\epsilon(x),\right.\nonumber\\
&&\left.  \left(\frac{\d\varphi(x_1)}{\d x_1}+\ep \frac{\partial \psi(y_\ep(x))}{\partial x_1}, \frac{\partial \psi(y_\ep(x))}{\partial x_2}, \frac{\partial \psi(y_\ep(x))}{\partial x_3} \right)  \right] \d x  +\nonumber\\
&&+\frac{1}{\ep^2}\int_{\Omega_\epsilon^0}\left[A^0_\epsilon(z_\ep(x)) \Phi_\epsilon^0(z_\ep(x),U_\epsilon(x))B_\epsilon^0(z_\ep(x))\nabla U_\epsilon(x),\right.\nonumber\\
&&\left.  \left(\frac{\mu}{t_\ep}\frac{\partial \hat\varphi\left(\frac{\mu x_1}{t_\ep} \right)}{\partial x_1} + \frac{\ep \mu}{r_\ep t_\ep} \frac{\partial \hat\psi (z_\ep(x))}{\partial x_1}, \frac{1}{r^2_\ep}\frac{\partial\hat\psi (z_\ep(x))}{\partial x_2}, \frac{1}{r^2_\ep}\frac{\partial \hat\psi (z_\ep(x))}{\partial x_3}\right)  \right] \d x\nonumber
\end{eqnarray}
(using the change of variable $y=y_\ep(x)$ in the integral over $\Omega^1_\ep$ and the change of variables $z=z_\ep(x)$ in the integral over $\Omega^0_\ep$)
\begin{align*}
& = \int_{Y_\ep^1}\left[A^1(y) \Phi_\epsilon^1(y,u_\ep(y))B^1(y)\nabla^\ep u_\ep(y),  \right.\\
&\ \ \ \ \ \ \ \ \left.\left(\frac{\d\varphi(y_1)}{\d y_1}+\ep \frac{\partial \psi(y)}{\partial y_1}, \frac{\partial \psi(y)}{\partial y_2}, \frac{\partial \psi(y)}{\partial y_3} \right)
  \right] \d y + \\
  &+ \frac{1}{\mu}t_\ep r_\ep^2\int_{Z^0}\left[A^0\left(\frac{z_1}{\mu},z'\right) \Phi_\ep^0\left(\frac{z_1}{\mu},z',\hat{u}(z)\right)B^0 \left(\frac{z_1}{\mu},z'\right)\cdot \right.\\
  &  \ \ \ \ \ \ \ \ \ \ \ \ \ \ \ \ \cdot \left. \left(\frac{\mu}{t_\ep}\frac{\partial \hat{u}_\ep (z)}{\partial z_1}, \frac{1}{\ep r_\ep}\frac{\partial \hat{u}_\ep (z)}{\partial z_2}, \frac{1}{\ep r_\ep}\frac{\partial \hat{u}_\ep (z)}{\partial z_3}\right)
  \right.,\\ &\left. \ \ \ \ \ \ \ \ \ \ \ \ \ \ \ \ \
  \left(\frac{\mu}{t_\ep}\frac{\d \hat{\varphi} (z_1)}{\d z_1}+\frac{\ep}{r_\ep t_\ep}\frac{\partial \hat{\psi} (z)}{\partial z_1}, \frac{1}{r_\ep^2}\frac{\partial \hat{\psi} (z)}{\partial z_2}, \frac{1}{r_\ep^2}\frac{\partial \hat{\psi} (z)}{\partial z_3}\right) \right] \d z
\end{align*}
Taking the limit, we get
\begin{align*}
T_1^\ep &\to \int_{Y^1}\left[\sigma^1(y), \nabla'(\varphi, \psi)(y)\right] \d y
+ \int_{Z^0}\left[\sigma^0(z),\nabla'(\hat{\varphi},\hat{\psi})(z) \right] \d z.
\end{align*}

The second term
\begin{align*}
T_2^\ep& =  \frac{1}{\ep^2} \int_{\Omega_\ep}\left[A_\ep(x) \Phi_\ep(x, U_\ep(x)) B_\ep(x) \nabla U_\ep(x), \tau_\ep(x)\right]\d x \to\\
& \to \int_{Y^1}\left[\sigma^1(y), (\nabla'(\varphi, \psi)+\lambda f_1)(y)\right] \d y +\\ &+\int_{Z^0}\left[\sigma^0(z),(\nabla'(\hat{\varphi},\hat{\psi})+\lambda f_2)(z) \right] \d z,
\end{align*}
when $\ep$ tends to zero.

The third term
\begin{align*}
T_3^\ep& =  \frac{1}{\ep^2} \int_{\Omega_\ep}\left[A_\ep(x) \Phi_\ep(x, U_\ep(x)) B_\ep(x) \tau_\ep(x), \nabla U_\ep(x)\right]\d x \to \\
& \to \int_{Y^1}\left[A^1(y) \Phi^1(y, u(y)) B^1(y) (\nabla'(\varphi,\psi)+\lambda f_1)(y), \nabla'(u,w)(y)\right]\d y+\\
&+\int_{Z^0}\left[A^0\left(\frac{z_1}{\mu},z'\right) \Phi^0\left(\frac{z_1}{\mu},z',\hat{u}(z)\right)B^0\left(\frac{z_1}{\mu},z'\right)(\nabla'(\hat{\varphi},\hat{\psi})+\lambda f_2)(z),\right.\\
&\left. \ \ \ \ \ \ \ \ \ \nabla'(\hat{u},\hat{w})(z),
  \right] \d z,
\end{align*}
when $\ep$ tends to zero.

The last term
\begin{align*}
T_4^\ep &=\frac{1}{\ep^2} \int_{\Omega_\ep}\left[A_\ep(x) \Phi_\ep(x, U_\ep(x)) B_\ep(x) \tau_\ep(x), \tau_\ep(x)\right]\d x \to \\
&\to\int_{Y^1}\left[A^1(y) \Phi^1(y, u(y)) B^1(y) (\nabla'(\varphi,\psi)+\lambda f_1)(y), \right.\\
&\ \ \ \ \ \ \ \ \left.(\nabla'(\varphi,\psi)+\lambda f_1)(y)\right]\d y +\\
&+\int_{Z^0}\left[A^0\left(\frac{z_1}{\mu},z'\right) \Phi^0\left(\frac{z_1}{\mu},z',\hat{u}(z)\right)B^0\left(\frac{z_1}{\mu},z'\right)(\nabla'(\hat{\varphi},\hat{\psi})+\lambda f_2)(z),\right.\\
&\left. \ \ \ \ \ \ \ \ \ (\nabla'(\hat{\varphi},\hat{\psi})+\lambda f_2)(z)
  \right] \d z,
\end{align*}
when $\ep$ tends to zero.


Adding the limits of $T_1^\ep$, $T_2^\ep$, $T_3^\ep$, and $T_4^\ep$, we get
\begin{align}\label{limitseged2}
&-\int_{Y^1}[\sigma^1(y),\lambda f_1(y)]\d y - \int_{Z^0}[\sigma^0(z),\lambda f_2(z)]\d z + \\
&+\int_{Y^1}[A^1(y) \Phi^1(y,u(y_1))B^1(y)(\nabla'(\varphi,\psi)+\lambda f_1)(y),\nabla'(\varphi, \psi)(y)-\nonumber\\
& \ \ \ \ \ \  -\nabla'(u, w)(y)+\lambda f_1(y)]
 + \nonumber
\end{align}
\begin{align}
 &+\int_{Z^0}\left[A^0\left(\frac{z_1}{\mu},z'\right) \Phi^0\left(\frac{z_1}{\mu},z',\hat{u}(z)\right)B^0\left(\frac{z_1}{\mu},z'\right)(\nabla'(\hat{\varphi},\hat{\psi})+\lambda f_2)(z),\right.\nonumber\\
&\left. \ \ \ \ \ \ \ \ \ \nabla'(\hat{\varphi},\hat{\psi})(z)-\nabla'(\hat{u},\hat{w})(z)+\lambda f_2(z),
  \right] \d z \geq 0.\nonumber
\end{align}
Setting
$$\varphi - u = \theta (v-u), \ \ \psi - w = \theta (h - w), \ \  \hat{\varphi}=\theta \hat{v}, \ \ \mbox{ and } \hat{\psi}=\theta\hat{h},$$
where $\theta>0$, dividing by $\theta$, then letting $\theta \to 0$, we get the limit variational inequality.

Putting
$$(\varphi, u)=(\psi, w) \ \ \mbox{ and } \ \ (\hat\varphi, \hat{u})=(\hat\psi, \hat{w}),$$
dividing by $\lambda$, and letting $\lambda\to 0$, we get
\begin{align*}
& \int_{Y^1} [\sigma^1(y)-A^1(y) \Phi^1(y,u(y_1))B^1(y)\nabla'(u,w)(y),f_1(y)]\d y +\\
&+ \int_{Z^0}\left[\sigma^0(z)-A^0\left(\frac{z_1}{\mu},z'\right) \Phi^0\left(\frac{z_1}{\mu},z',\hat{u}(z)\right) B^0\left(\frac{z_1}{\mu},z'\right) \nabla' \left(\hat{u},\hat{w}\right)(z),\right.\\
&\left. \ \ \ \ \ \ \ \ \ \   f_2(z)\right]\d z \geq 0, \quad  \forall f_1 \in H^1(Y^1), \forall f_2 \in H^1(Z).
\end{align*}
Then 3) follows.
\end{proof}

\subsection{The case $\mu = +\infty$ and $0<\nu <+\infty$}

\begin{theo}\label{limit3} Let $\mu = +\infty$ and $0<\nu <+\infty$.
Assume that (A1)-(A9) are verified and the following two conditions are satisfied:

{\vskip 0.2cm}

{\bf (C1)} $\varphi \in K$ implies $\chi_{Y_\ep^1}\varphi \in K_\ep$;

{\vskip 0.2cm}

{\bf (C2)} $\psi \in L$ implies $\chi_{Y_\ep^1}\psi \in K_\ep$.

{\vskip 0.2cm}

Then the following three statements hold:

{\vskip 0.2cm}

1) There exists a subsequence of the sequence $U_\ep$ of solutions of (\ref{nagyegyen}), also denoted by $U_\ep$, and a function $u\in H^1((-1,0)\cup (0,1))\cap K$ such that (\ref{foeq}) is satisfied.

{\vskip 0.2cm}

2)  Let $u$ and $w$ be given as in Theorem \ref{yhatarertek}. Then $(u, w)$ solves the limit variational problem:\\
find $u\in H^1((-1,0)\cup (0,1))\cap K$, $u(-1)=u(1)=0$ and $w\in L$  such that for all
$v\in H^1((-1,0)\cup (0,1))\cap K$, $v(-1)=v(1)=0$ and $h\in L$
\begin{align}\label{limithengeresinf}
&\int_{Y^1}[A^1(y) \Phi^1(y,u(y_1))B^1(y)\nabla'(u, w)(y),\nabla'(v, h)(y)-\nabla'(u, w)(y)]
 \geq 0.\end{align}

{\vskip 0.2cm}

3) Let $\sigma^1$ given in Theorem \ref{yhatarertek}. Then $$\sigma^1 (y) = A^1(y) \Phi^1(y,u(y))B^1(y)\nabla'(u,w)(y) \ \ \mbox{ for a.e. } y\in Y^1.$$
\end{theo}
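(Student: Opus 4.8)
The plan is to reproduce the Minty--Browder scheme of the proof of Theorem~\ref{limit2}, exploiting that in the regime $\mu=+\infty$ the neck $\Omega_\ep^0$ contributes nothing in the limit, so that only the cylindrical integral over $Y^1$ survives. This is exactly why the statement needs neither the notch-scale quantities $\hat u,\hat w$ nor the assumptions (A10)--(A11) and conditions (C3)--(C4): everything is now driven by the change of variables (\ref{ch1}) and the convergences of Theorem~\ref{yhatarertek}. Statement~1) is immediate, since by Lemma~\ref{corfelt3.1} a subsequence of $(U_\ep)_\ep$ satisfies (\ref{felt3.1}) and Theorem~\ref{uephatar} then furnishes $u\in H^1((-1,0)\cup(0,1))\cap K$ for which (\ref{foeq}) holds.

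For statement~2) I would fix $\varphi$ with the piecewise $H^1$ structure lying in $K$ and $\psi\in L$, and use the test sequence
\[
V_\ep(x)=\chi_{\Omega_\ep^1}(x)\Bigl(\varphi(x_1)+\ep\,\psi\bigl(x_1,\frac{x'}{\ep}\bigr)\Bigr),
\]
extended across $\Omega_\ep^0$ in any admissible fashion (for instance by linear interpolation in $x_1$) so that $V_\ep\in M_\ep$; conditions (C1)--(C2) ensure this membership for $\ep$ small. Inserting $\eta=U_\ep$, $\xi=\nabla U_\ep$ and $\tau_\ep(x)=\chi_{\Omega_\ep^1}(x)\,(\nabla'(\varphi,\psi)+\lambda f_1)(y_\ep(x))$ into the monotonicity condition (A6), multiplying by $1/\ep^2$ and integrating over $\Omega_\ep$, the left-hand side splits into $T_1^\ep-T_2^\ep-T_3^\ep+T_4^\ep$. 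Since $\tau_\ep$ vanishes on $\Omega_\ep^0$, the neck enters only through $T_1^\ep$ (via $\nabla V_\ep$), and its contribution there must be shown to vanish. Passing to the limit through the change of variables $y=y_\ep(x)$, the weak convergence $\chi_{Y_\ep^1}\sigma_\ep\rightharpoonup\sigma^1$ of Theorem~\ref{yhatarertek}, the strong convergence of the frozen test fields, and the continuity assumption (A7) for $\Phi_\ep^1\to\Phi^1$, each $T_i^\ep$ tends to the corresponding $Y^1$-integral, yielding the analogue of (\ref{limitseged2}) with the $Z^0$-terms removed.

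From that inequality the remaining two statements follow by the same algebra as in Theorem~\ref{limit2}, restricted to $Y^1$. Taking $\lambda=0$, writing $\nabla'(\varphi,\psi)=\nabla'(u,w)+\theta\bigl(\nabla'(v,h)-\nabla'(u,w)\bigr)$, dividing by $\theta$ and letting $\theta\to0$ produces the limit inequality (\ref{limithengeresinf}) (the nonlinearity $\Phi^1(y,u(y_1))$ stays frozen throughout, so this is a genuine linear-in-$\theta$ manipulation). Taking instead $(\varphi,\psi)=(u,w)$, dividing by $\lambda$ and letting $\lambda\to0$ gives $\int_{Y^1}[\sigma^1-A^1\Phi^1 B^1\nabla'(u,w),f_1]\d y=0$ for every $f_1$, which is statement~3).

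The main obstacle is the rigorous proof that the neck part of $T_1^\ep$ vanishes. Because $\mu=+\infty$ the notch-adapted change of variables (\ref{ch2}) is not available, so this cannot be quoted from Theorem~\ref{limit2} and must be estimated directly in the $x$-variables. Using the bound of Lemma~\ref{corfelt3.1}, namely $\int_{\Omega_\ep}|\nabla U_\ep|^2\d x\le C\ep^2$, together with the growth condition (A5) to control the flux $A_\ep\Phi_\ep B_\ep\nabla U_\ep$ on $\Omega_\ep^0$, and the elementary bound $\int_{\Omega_\ep^0}|\nabla V_\ep|^2\d x\le C\,\ep^2 r_\ep^2/t_\ep$ for the interpolated test field, a Cauchy--Schwarz estimate controls the neck part of $T_1^\ep$ by a constant times $r_\ep/\sqrt{t_\ep}=(r_\ep^2/t_\ep)^{1/2}$, which tends to $0$ precisely because $t_\ep/r_\ep^2\to\mu=+\infty$. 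Once this is secured, the limits of $T_3^\ep$ and $T_4^\ep$ reduce to strong--weak pairings of $\nabla^\ep u_\ep$ against the coefficient $\Phi_\ep^1(\cdot,u_\ep)$, handled exactly as the $Y^1$-computations of Theorem~\ref{limit2} via (A7) and Theorem~\ref{yhatarertek}.
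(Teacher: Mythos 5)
Your proposal is correct and follows essentially the same route as the paper: the same Minty decomposition $T_1^\ep-T_2^\ep-T_3^\ep+T_4^\ep$ with $\tau_\ep$ set to zero on the neck, the same passage to the limit over $Y^1$ via Theorem \ref{yhatarertek} and (A7), and the same $\theta\to 0$ / $\lambda\to 0$ endgame. The only cosmetic differences are that the paper realizes your ``linear interpolation across $\Omega_\ep^0$'' explicitly through the cutoff $\gamma^0(|x_1|/t_\ep)$, and records the vanishing of the neck contribution as the bound $C(\ep^2+r_\ep^2/t_\ep)$ on $\frac{1}{\ep^2}\int_{\Omega_\ep^0}|\nabla V_\ep|\,\d x$ rather than via your (slightly more careful) Cauchy--Schwarz estimate yielding $(r_\ep^2/t_\ep)^{1/2}\to 0$.
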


\begin{proof} Statement 1) follows from Theorem \ref{uephatar}.


To prove statement 2),
let $\varphi^-\in H^1([-1,0])$ and $\varphi^+\in H^1([0,1])$
and define $\varphi \in H^1((-1,0)\cup(0,1))\cap K$ such that
\begin{equation}\nonumber
\varphi(x_1)=\begin{cases}
\varphi^-(x_1), \ \ \mbox{ if } \ x_1\in (-1,0)\\
\varphi^+(x_1), \ \ \mbox{ if } \ x_1\in (0,1).
\end{cases}
\end{equation}
Let  $\psi\in L$
and $\gamma^0: [0, +\infty) \to \R$ defined by
\begin{equation}\nonumber
\gamma^0(\tau)=
\begin{cases}
\tau, \ \mbox{  if  } 0\leq \tau \leq 1\\
1,  \ \mbox{  if  } \tau\geq  1.
\end{cases}
\end{equation}
and
$$V_\ep(x)=\varphi(x_1)\gamma^0\left(\frac{|x_1|}{t_\ep}\right)+\ep \psi\left(x_1,\frac{x'}{\ep}\right), \ \mbox{ a.e } \in \Omega_\ep,$$
which belongs to $M_\ep$.\\
For $\ep$ small enough, by a simple calculation we obtain
\begin{align*}
\frac{1}{\ep^2}&\int_{\Omega_\ep^1}\left|\nabla V_\ep - \frac{\d \varphi(x_1)}{\d x_1}e_1-\nabla_{y'}\psi\left(x_1, \frac{x'}{\ep}\right)\right| \d x + \frac{1}{\ep^2}\int_{\Omega_\ep^0} |\nabla V_\ep| \d x \leq\\
&\leq C \left(\ep^2+\frac{r_\ep^2}{t_\ep}\right)
\end{align*}
which tends to zero since $\mu=+\infty$.

Putting $\eta=U_\ep(x)$, $\xi=\nabla U_\ep(x)$ and
\begin{equation}\nonumber
\tau=\tau_\ep(x)=
\left\{\begin{array}{lll}
 (\nabla'(\varphi, \psi)+\lambda f_1)(y_\ep(x)), & \mbox{if} & x\in \Omega_\ep^1\\
 0, & \mbox{if} & x\in \Omega_\ep^0
 \end{array}\right.
 \end{equation}
 in the monotonicity condition, we get
\begin{align*}
0&\leq \frac{1}{\ep^2} \int_{\Omega_\ep}\left[A_\ep(x) \Phi_\ep(x, U_\ep(x)) B_\ep(x) \nabla U_\ep(x)-A_\ep(x) \Phi_\ep(x, U_\ep(x)) B_\ep(x) \tau_\ep(x), \right.\\
& \left. \ \ \ \ \ \ \ \ \ \ \ \ \nabla U_\ep(x)-\tau_\ep(x) \right] \d x =\\
&= \frac{1}{\ep^2} \int_{\Omega_\ep}\left[A_\ep(x) \Phi_\ep(x, U_\ep(x)) B_\ep(x) \nabla U_\ep(x), \nabla U_\ep(x)\right]\d x - \\
&- \frac{1}{\ep^2} \int_{\Omega_\ep}\left[A_\ep(x) \Phi_\ep(x, U_\ep(x)) B_\ep(x) \nabla U_\ep(x), \tau_\ep(x)\right]\d x -\\
&- \frac{1}{\ep^2} \int_{\Omega_\ep}\left[A_\ep(x) \Phi_\ep(x, U_\ep(x)) B_\ep(x) \tau_\ep(x), \nabla U_\ep(x)\right]\d x + \\
&+\frac{1}{\ep^2} \int_{\Omega_\ep}\left[A_\ep(x) \Phi_\ep(x, U_\ep(x)) B_\ep(x) \tau_\ep(x), \tau_\ep(x)\right]\d x =\\
&= T_1^\ep - T_2^\ep - T_3^\ep + T_4^\ep.
\end{align*}

In the following we study each term separately. The first term
\begin{align*}
T_1^\ep & =\frac{1}{\ep^2} \int_{\Omega_\epsilon}\left[A_\epsilon(x) \Phi_\epsilon(x,U_\epsilon(x))B_\epsilon(x)\nabla U_\epsilon(x),\nabla U_\epsilon(x) \right] \d x\leq\\
&\leq \frac{1}{\ep^2}\int_{\Omega_\epsilon}\left[A_\epsilon(x) \Phi_\epsilon(x,U_\epsilon(x))B_\epsilon(x)\nabla U_\epsilon(x),\nabla V_\epsilon(x)  \right] \d x =\\
&=\frac{1}{\ep^2}\int_{\Omega_\epsilon^1}\left[A_\epsilon(x) \Phi_\epsilon(x,U_\epsilon(x))B_\epsilon(x)\nabla U_\epsilon(x),\nabla V_\epsilon(x)  \right] \d x +\\
&+\frac{1}{\ep^2}\int_{\Omega_\epsilon^0}\left[A_\epsilon(x) \Phi_\epsilon(x,U_\epsilon(x))B_\epsilon(x)\nabla U_\epsilon(x),\nabla V_\epsilon(x)  \right] \d x,
\end{align*}
where the second term tends to zero. We use the change of variables $y=y_\ep(x)$ in the first term:
\begin{align*}
T_1^\ep & \leq \int_{Y_\epsilon^1}\left[A^1(y) \Phi^1_\epsilon(y,u_\epsilon(y))B^1(y)\nabla^\ep u_\epsilon(y),\right.\\
&\left. \ \ \ \ \ \ \ \ \left(\frac{\d\varphi(y_1)}{\d y_1}+\ep \frac{\partial \psi(y)}{\partial y_1}, \frac{\partial \psi(y)}{\partial y_2}, \frac{\partial \psi(y)}{\partial y_3} \right)  \right] \d y + O_\ep = \\
\end{align*}
\begin{align*}
&=\int_{Y^1}\left[A^1(y) \Phi^1_\epsilon(y,u_\epsilon(y))B^1(y)\nabla^\ep u_\epsilon(y),\right.\\
&\left. \ \ \ \ \ \ \ \ \left(\frac{\d\varphi(y_1)}{\d y_1}+\ep \frac{\partial \psi(y)}{\partial y_1}, \frac{\partial \psi(y)}{\partial y_2}, \frac{\partial \psi(y)}{\partial y_3} \right)  \right] \d y + O_\ep.
\end{align*}
Taking the limit of both sides, we get
\begin{align*}
\lim_{\ep\to 0} T_1^\ep & \leq \int_{Y^1}\left[\sigma^1(y),\nabla'(\varphi, \psi)(y)\right]\d y.
\end{align*}

\noindent The third term
\begin{align*}
T_3^\ep &= \frac{1}{\ep^2} \int_{\Omega_\ep}\left[A_\ep(x) \Phi_\ep(x, U_\ep(x)) B_\ep(x) \tau_\ep(x), \nabla U_\ep(x)\right]\d x =\\
&= \frac{1}{\ep^2} \int_{\Omega_\ep^1}\left[A^1(y_\ep(x)) \Phi_\ep(y_\ep(x), U_\ep(x)) B^1(y_\ep(x)) (\nabla'(\varphi,\psi)+\lambda f_1)(y_\ep(x)), \right.\\
&\left.\ \ \ \ \ \ \ \ \  \ \ \ \ \ \ \nabla U_\ep(x)\right]\d x,
\end{align*}
as the integral on $\Omega^0_\ep$ is equal with zero because $\tau_\ep=0$ on $\Omega^0_\ep$. Using the change of variable $y=y_\ep(x)$ we get
\begin{align*}
T_3^\ep &= \int_{Y_\ep^1}\left[A^1(y) \Phi_\ep(y, u_\ep(y)) B^1(y) (\nabla'(\varphi,\psi)+\lambda f_)(y), \nabla^\ep u_\ep(y)\right]\d y = \\
&=\int_{Y^1}\left[A^1(y) \Phi_\ep(y, u_\ep(y)) B^1(y) (\nabla'(\varphi,\psi)+\lambda f_1)(y), \nabla^\ep u_\ep(y)\right]\d y  + O_\ep.
\end{align*}
Taking the limit when $\ep \to 0$, we get
\begin{align*}
T_3^\ep
&\to\int_{Y^1}\left[A^1(y) \Phi(y, u(y_1)) B^1(y) (\nabla'(\varphi,\psi)+\lambda f_1)(y), \nabla'(u,w)(y)\right]\d y.
\end{align*}
Similarly
\begin{align*}
T_2^\ep
&\to\int_{Y^1}\left[\sigma^1(y), (\nabla'(\varphi,\psi)+\lambda f_1)(y)\right]\d y
\end{align*}
and
\begin{align*}
T_4^\ep
&\to\int_{Y^1}\left[A^1(y) \Phi(y, u(y_1)) B^1(y) (\nabla'(\varphi,\psi)+\lambda f_1)(y),\right. \\
&\ \ \ \ \ \ \ \ \ \ \  \left.(\nabla'(\varphi,\psi)+\lambda f_1)(y)\right]\d y,
\end{align*}
when $\ep \to 0$.

Adding the limits of $T_1^\ep$, $T_2^\ep$, $T_3^\ep$, and $T_4^\ep$, we get
\begin{align}\label{limitseged3}
&\int_{Y^1}[A^1(y) \Phi^1(y,u(y_1))B^1(y)(\nabla'(\varphi,\psi)+\lambda f_1)(y),\nabla'(\varphi, \psi)(y)-\\
&\ \ \ \ \ -\nabla'(u, w)(y)+\lambda f_1(y)]\d z \nonumber
- \int_{Y^1}[\sigma^1(y), \lambda f_1(y)]\d y\geq 0.\nonumber
\end{align}
Setting
$$\varphi - u = \theta (v-u), \ \ \mbox{ and } \ \  \psi - w = \theta (h - w), $$
where $\theta>0$, dividing by $\theta$, then letting $\theta \to 0$, we get the limit variational inequality.

3) Putting
$$(\varphi, u)=(\psi, w),$$
dividing by $\lambda$, and letting $\lambda\to 0$, we get
\begin{align*}
& \int_{Y^1} [\sigma^1(y)-A^1(y) \Phi^1(y,u(y_1))B^1(y)\nabla'(u,w)(y),f_1(y)]\d y  \geq 0\\
 & \ \forall f_1 \in H^1(Y^1).
\end{align*}
Then 3) follows.
\end{proof}

\subsection*{Acknowledgement} This research was partially
supported by the research grant CNCSIS PN II IDEI523/2007.



\bigskip
\rightline{\emph{Received: October 11, 2009; Revised: April 3, 2010}}     

\end{document}